\newcommand{\coloneq}{\mathrel{\mathop:}=}
\newcommand{\vect}[1]{\boldsymbol{#1}}
\newcommand{\tvect}[1]{{\bf #1}}
\newcommand{\xvect}[1]{{\it #1}}
\newcommand{\matr}[1]{\mathlarger{\mathbf{#1}}}
\newcommand{\tmatr}[1]{\mathlarger{\mathcal{#1}}}
\newcommand{\R}{\ensuremath{\mathbb{R}}}
\newcommand{\N}{\ensuremath{\mathbb{N}}}
\newcommand{\dt}{\Delta t}
\newcommand{\norm}[1]{\left\lVert#1\right\rVert}
\theoremstyle{plain}
\newtheorem{definition}{Definition}
\newtheorem{lemma}{Lemma}
\newtheorem{theorem}{Theorem}
\newtheorem{remark}{Remark}
\begin{document}


\title{A multigrid perspective on the parallel full approximation scheme in space and time}
\author{Matthias Bolten\affil{1}, Dieter Moser\affil{2}\corrauth, Robert Speck\affil{2}  }

\address{
\affilnum{1}Department of Mathematics, Universit\"at Kassel, Germany. \break
\affilnum{2} J\"ulich Supercomputing Centre, Forschungszentrum J\"ulich GmbH, Germany.    
}

\corraddr{E-mail: d.moser@fz-juelich.de}

\begin{abstract}
  For the numerical solution of time-dependent partial differential equations, 
time-parallel methods have recently shown to provide a
promising way to extend prevailing strong-scaling limits of numerical
codes. One of the most complex methods in this field is the ``Parallel
Full Approximation Scheme in Space and Time'' (PFASST). 
PFASST already shows promising results for many use cases and many more is work in progress. 
However, a solid and reliable mathematical foundation is still missing. 
We show that under certain assumptions the PFASST algorithm can be conveniently and
rigorously described as a multigrid-in-time method.
Following this equivalence, first steps towards a comprehensive analysis of PFASST using block-wise local Fourier analysis are taken. 
The theoretical results are applied to examples of diffusive and advective type.

\end{abstract}

\keywords{parallel-in-time; PFASST; multigrid; local Fourier analysis; high-performance computing}

\maketitle

\vspace{-6pt}

\section{Introduction}
Due to the rapid increase of the number of cores of todays and future HPC systems the demand for new parallelization strategies has grown rapidly in the last decades.
When the speedup of a parallelization of the spatial dimensions is saturated, one general idea is to utilize parallelization of the temporal dimension.
In \cite{Burrage1997} we find a classification of such methods, divided into 
parallelization \textit{across the step, across the method} or \textit{across the problem}. 

Direct time-parallel methods mostly belong to the class of parallelization across the method, 
examples are certain parallel Runge-Kutta methods \cite{iserles1990theory,
butcher1997order}. 
Only modest parallel speedup is expected for these methods, 
because the number of processing units used for the parallelization are bound by e.g. the number of Runge-Kutta stage values.
Other direct methods for parallel-in-time integration include RIDC~\cite{ChristliebEtAl2010}, ParaExp~\cite{Guettel2013parallel}, tensor-product space-time solvers~\cite{MadayRonquist2008} or methods using Laplace transformation~\cite{Sheen2003parallel}.

If a method decomposes the problem into subproblems which are solvable in a parallel manner and couples these subproblems using an iterative method, 
it typically belongs to the class of parallelizations across the problem.
The most prominent example are waveform relaxation methods~\cite{gander1999waveform, vandewalle1989parallel}, which are part of the broad area of domain decomposition methods.

First ideas of parallel-in-time integration date back to Nievergelt in 1964~\cite{Nievergelt1964}, which belongs to the class of multiple shooting methods and hence to the class of parallelizations across the step. 
More parallel-in-time integration methods were found in the area of multiple-shooting methods~\cite{ChartierPhilippe1993, BellenZennaro1989}. 
Among them, in 2001 by Lions et al., \textit{Parareal}~\cite{LionsEtAl2001}
renewed the interest in parallel-in-time methods and sparked many new papers in its field.
The success of Parareal is accounted to its simplicity and applicability:
Only a fine but expensive and a coarse but cheap propagator in time have to be provided by the user.
Then, parallelization across the temporal dimension can be achieved in an iterative prediction-correction manner. 
In principle, the number of processing units is not bounded, but depends on the actual decomposition of the time domain.

Parareal influenced other methods (see \cite{GanderEtAl2013_DDM})
or even inspired the design of new methods.
In~\cite{Minion2010}, the Parareal approach is coupled to iterative solvers of a collocation problem, the so called spectral deferred correction (SDC) methods. 
This approach is extended to the ``parallel full approximation scheme in space and time'' (PFASST) in~\cite{EmmettMinion2012}.
PFASST adopts and evolves the characteristics of Parareal by interweaving its iterations with those of the local SDC scheme. 
In addition, PFASST uses ideas from the theory of nonlinear FAS multigrid methods. 

Multigrid methods in general have a long-standing successful history and a solid mathematical basis, see e.g.~\cite{trottenberg2000multigrid}. 
Regarding parallel-in-time integration, the first attempt using multigrid ideas dates back to Hackbusch in 1984~\cite{Hackbusch1984}. 
Since then, multigrid methods were further developed and resulted, e.g., in the multigrid waveform relaxation \cite{LubichOstermann1987, Vandewalle1994space}, in
multigrid reduction-in-time \cite{Falgout2014parallel}, or in classical space-time-multigrid \cite{HortonVandewalle1995, Gander2014analysis}.
%
%
All these classes are not strictly separated from each other.
Oftentimes methods may be reformulated to fit into a new class.
A prominent example is Parareal itself: 
it was reformulated as a multiple shooting method as well as a multigrid method in~\cite{GanderVandewalle2007_SISC}, which in turn paved the way for a comprehensive analysis of Parareal.

This already shows the growing number and diversity of parallel-in-time methods.
A classification of PFASST into the diversity of methods contributes to the understanding of PFASST by opening up the opportunity to use different mathematical tools from different fields.
In particular, multigrid theory offers a variety of tools such as local Fourier analysis to estimate the convergence and to obtain a priori error bounds. 
A mathematical analysis becomes more and more important for the comparison of these algorithms and the design of algorithms for different applications.

The goal of this work is to associate PFASST with multigrid methods and apply the tools we find in multigrid theory along the lines of two standard problems, namely diffusion and advection. 
This sheds light on a general strategy how to estimate the convergence rate of PFASST and hereby the number of iterations needed to achieve a certain precision.

To achieve this goal, we proceed as follows: In Section~\ref{ch:preliminaries}, we we introduce the notation and preliminaries necessary to state a matrix formulation of PFASST and its constituents. 
In particular, we introduce the collocation problem and the notation to deal with the nested multilevel structure of our setting.
On this basis, we introduce the spectral deferred correction and its multi-level enhancement in matrix form in Section \ref{ch:sdc} and \ref{ch:mlsdc}. 
Then, we introduce PFASST in algorithmic form in Section~\ref{ch:pfasst_algorithm} which is then converted into matrix form in \ref{sec:block_gauss_seidel} to \ref{ch:assembling_pfasst}.  
This matrix form facilitates the use of ideas from multigrid analysis in Section~\ref{ch:lfa_and_transformation_matrices} to \ref{ch:periodicity_in_time}, which leads to a block-decomposition of the iteration matrix of PFASST.
In Section~\ref{ch:numerical_experiments}, we introduce four strategies to estimate the convergence rate of PFASST. 
The work is closed with an outlook and a conclusion in Section~\ref{ch:outro}.

\section{The Parallel Full Approximation Scheme in Space and Time}
\label{ch:pfasst}
We start with a brief introduction of the building blocks of PFASST from the perspective of linear iterative solvers. 
To this end, we restrict ourselves to linear autonomous ordinary differential equations and---for the multi-level parts---to two levels only.
We will comment on these restrictions in Section~\ref{ch:outro}.

\subsection{Preliminaries and Notation}
\label{ch:preliminaries}

The starting point is the linear autonomous ordinary differential equation in the Picard formulation
\begin{align}
  \xvect{U}(t) = \xvect{U}_0 + \int_{t_0}^t \matr{A}\xvect{U}(\tau) \mathrm{d}\tau, \quad  t \in \left[ t_0, T \right],
\label{eq:ode_integral_form}
\end{align}
where $\matr{A}$ is a discretized spatial operator, e.g., stemming from a method of line discretization of a partial differential equation.
For the discretization in the temporal dimension the time domain $\left[ t_0, T \right]$ is divided into $L$ subintervals. 
Each subinterval $\left[ t_{l-1},t_{l} \right]$, with $l \in \left\{ 1,\ldots,L \right\}$, contains a set of $M$ nodes $\left\{ \tau_1, \ldots, \tau_M\right\}$.
We choose
  \begin{align}
    \begin{split}
   	0 = t_0<\ldots<t_L=T,&\quad t_l<\tau_1<\ldots<\tau_M = t_{l+1},\\
   	\dt = t_{l+1}-t_l,&\quad \Delta\tau_m = \tau_{m+1}-\tau_m.
    \end{split}
  \label{eq:time_subintervals}
  \end{align}
Each set of nodes $\left\{ \tau_1, \ldots, \tau_M\right\}$ are used as quadrature nodes for the numerical integration with rules like, e.g., Gau\ss-Radau or Gau\ss-Lobatto.
Note that the last quadrature node coincides with the right border of the particular subinterval, which simplifies the formal notation of the algorithm.
The results translate to other quadrature rules with minor modifications, though.
Furthermore, if a mathematical entity like a set of numerical values or a certain matrix
belongs to a subinterval $\left[t_l, t_{l+1}\right]$ we denote it e.g.~by $\vect{U}_{\left[ t_l, t_{l+1}\right]}$ (if it is not clear from the context). 

Due to the nested structure and the distinct treatment of spatial and temporal dimensions, an appropriate notation is needed. 
Continuous functions are always represented by lower case letters, discretized and semi-discretized functions are the upper case version. 
Let $u(t,x)$ be a function in space and time, defined on the domain $\left[ t_0,T \right] \times \R$, with $T \in \R_+$. 
For $N$ degrees-of-freedom in space $x_1, ..., x_N$ we use the notation
$$\xvect{U}(t) = \left( u(t,x_1),u(t,x_2),\dots,u(t,x_N) \right)^T \in \R^{N}, \quad t\in \left[ t_0,T \right]$$
for semi-discretization in space.
A full space-time discretization is denoted as
\begin{align*}
  \vect{U}_{\left[ t_{l-1}, t_{l}\right]} &= \left( \xvect{U}(\tau_1),\xvect{U}(\tau_2),\dots,\xvect{U}(\tau_M) \right)^T \in \R^{M \cdot N}, \quad  \tau_i \in[t_{l-1},t_l],\ l\in\left\{ 1,\ldots,L \right\},\\
  \vect{U}   &= \left( \vect{U}_{\left[ t_{0}, t_{1}\right]} , \ldots , \vect{U}_{\left[ t_{L-1}, T\right]} , \right)^T  \in \R^{M \cdot N \cdot L}. 
\end{align*}

On each subinterval a \textit{collocation problem} is posed. 
It arises, when quadrature is used as a numerical counterpart to the integration in \eqref{eq:ode_integral_form}.
The basis for most quadrature formulations is the interpolation, 
easily expressed using the Lagrange polynomial basis $\left\{ \ell_i \right\}_{i=1}^{M}$,
with
\begin{align}
  \ell_i(s) \coloneq \prod_{k=1,j\neq i}^M\frac{s-\tau_k}{\tau_i-\tau_k}.
  \label{eq:lagrange_polynomials}
\end{align} 
If we weight each Lagrange polynomial with the evaluation of the function $f(t)$ at the point $\tau_i$ and sum them up, 
we get the interpolation polynomial of the function $f(t)$, which is exact on the nodes $\left\{ \tau_1,\ldots \tau_M \right\}$. 
Now, quadrature is nothing more than using the exact integration values of the interpolation polynomial as approximations for the integration of $f(t)$. 
The following definition employs this strategy. 

\begin{definition}
  Let  $a < \tau_1<\tau_2<\ldots<\tau_M = b$ be the set of quadrature nodes and $\matr{Q}$ the quadrature matrix with entries
  $$ q_{i,j} = \int_{a}^{\tau_j} \ell_i(\tau) \,\mathrm{d} \tau,\quad i,j = 1,...,M. $$
  We discretize \eqref{eq:ode_integral_form} 
  at the quadrature nodes, 
  using the matrix $\matr{Q}$ as approximation of the integral 
  and obtain this set of linear equations:
  \begin{align*}
    \xvect{U}(\tau_i) = \xvect{U}(t_0) + \sum_{j=1}^{M} q_{i,j} \matr{A} \xvect{U}\left(\tau_i\right),\quad i = 1,...,M. 
  \end{align*}
  Using the  Kronecker product and the vector of ones $\matr{1}_{M} \in \R^M$ we write this system of linear equations as
  \begin{align*}
    \vect{U} = \vect{U}_0 + \Delta t \matr{Q}\otimes\matr{A} \vect{U}, \quad \mbox{with} \; \vect{U}_0 = \matr{1}_{M} \otimes \xvect{U}(t_0),
  \end{align*}
  or, more compactly,
  \begin{align}
    \matr{M}\vect{U}=\left( \matr{I} - \Delta t \matr{Q} \otimes \matr{A} \right)\vect{U} = \vect{U}_{0}.
    \label{eq:colloc_problem}
  \end{align}
  This problem is called \textbf{collocation problem} on $[a,b]$.
  \label{def:colloc_problem}
\end{definition}

The set of quadrature nodes determines the kind of quadrature.
Well-known quadrature rules are Chebyshev, Gau\ss-Legendre, Gau\ss-Radau, and Gau\ss-Lobatto. 
These quadrature rules have a spectral order, 
which is reflected in the high order of the numerical solution of the collocation problem. 
Gau\ss-Radau and Gau\ss-Lobatto quadrature rules use quadrature nodes 
which are in accordance with \eqref{eq:time_subintervals}. 
Due of the higher order we focus on the Gau\ss-Radau quadrature rule in this paper.

Finally, the PFASST algorithm is working on a hierarchy of discretizations.
As mentioned before, we focus on the two-level version with spatial coarsening only, i.e.~PFASST is solving on a coarse and a fine level in space.
For both levels a separate set of operators and value vectors is needed.
The coarse level versions are simply denoted with a tilde, e.g. $\matr{\tilde A}$ is the coarse level version of $\matr{A}$.

\subsection{Spectral Deferred Corrections}
\label{ch:sdc}

Instead of directly solving the collocation problem on a subinterval, the spectral deferred corrections method (\textbf{SDC}) utilizes a low-order method to generate an iterative solution that converges to the collocation solution $\vect{U}$. 
SDC was first introduced by Dutt et al. \cite{DuttEtAl2000} as improvement of deferred correction methods \cite{FrankUeberhuber1977}. 
In the last decade, SDC was accelerated with GMRES or other Krylov subspace methods \cite{HuangEtAl2006}, enhanced to a high-order splitting method \cite{LaytonMinion2004,Minion2004,BourliouxEtAl2003}, 
and found its way into the domain of parallel respectively time-parallel computing \cite{GuibertDervout2007,MinionEtAl2008}, in particular within PFASST~\cite{Minion2010,EmmettMinion2012}.

Regarding the setting of this paper, we cast SDC as a \textit{preconditioned Richardson iteration method} for the collocation problem as defined in Definition~\ref{def:colloc_problem}.
This was pointed out earlier by various authors. For example in the work of Weiser et al. \cite{Weiser2013} this interpretation was used to optimize the convergence speed of SDC.


A general preconditioned Richardson iteration, noted as
\begin{align}
   \vect{U}^{k+1}   &= \vect{U}^k + \matr{P}^{-1}(\vect{c} - \matr{M}\vect{U}^k),
   \label{eq:preconditioned_richardson}
\end{align}
is fully described by the preconditioner $\matr{P}$, the system matrix $\matr{M}$, and the right-hand side $\vect{c}$ of the linear equation under consideration.
$\matr{P}$ has to be easy to invert, while being an accurate alternative for the system matrix $\matr{M}$.
The SDC method follows this approach by replacing the dense quadrature matrix $\matr{Q}$ by a lower triangular matrix $\matr{Q}_{\Delta}$.  
One simple way to generate a lower triangular matrix is to use the rectangle rule for quadrature instead of the Gau\ss-Radau rule. 
In \cite{Weiser2013} an LU decomposition of $\matr{Q}$ provides a $\matr{Q}_{\Delta}$ which results in better convergence properties than the use of the simple rectangle rule while requiring the same computational effort.

The particular choice
\begin{align}
  \matr{P}_{\mathrm{SDC}} = \matr{I} - \dt\matr{Q}_\Delta \otimes \matr{A}, \quad \mbox{and} \quad 
  \vect{c} = \left(\xvect{U}(t_0),\xvect{U}(t_0),\ldots,\xvect{U}(t_0) \right)^T\in\mathbb{R}^{NM},
  \label{eq:sdc_preconditioning_matrix}
\end{align}
then allows us to write SDC as preconditioned Richardson iteration with system matrix $\matr{M}$ as defined in Def.~\ref{def:colloc_problem} and where the right-hand side is given by the initial values $\xvect{U}(t_0)$ of the ODE spread on each node.
If SDC is used on another subinterval than the first, the right-hand side consists of a numerical approximation of $\xvect{U}(t_l)$ spread on each node.
In order to start the iteration an initial iteration vector $\vect{U}^{0}$ is needed. 
For SDC, the right-hand side is an apparent choice for a initial iteration vector. 
With these choices, one Richardson iteration is equivalent to one SDC sweep~\cite{WinkelEtAl2014,Weiser2013}. 
The iteration matrix of SDC is simply given by
\begin{align}
  \begin{split}
    \matr{T}_{\mathrm{SDC}}&= \matr{I} - \matr{P}^{-1}_{\mathrm{SDC}} \matr{M} \\
    &= \matr{I} - \left( \matr{I} - \Delta t \matr{Q}_{\Delta} \otimes \matr{A} \right)^{-1}\left( \matr{I} - \Delta t \matr{Q} \otimes \matr{A}  \right),
  \end{split}
  \label{eq:it_matrix}
\end{align}

Note that if we just use the lower triangular part of the $\matr{Q}$ matrix as $\matr{Q}_{\Delta}$,
the preconditioned Richardson iteration is a Gau\ss-Seidel iteration.
With $\matr{Q}_{\Delta}$ being a simpler integration rule or stemming from the LU decomposition of $\matr{Q}$ instead of the lower triangular part of $\matr{Q}$, we characterize SDC as \textbf{approximative Gau\ss-Seidel iteration}.

\subsection{Multi-level Spectral Deferred Corrections}
\label{ch:mlsdc}
The next step towards PFASST is the introduction of multiple levels in space (and time, which we will not consider here).
This leads to a multi-level spectral deferred corrections method called (\textbf{MLSDC}), first introduced and studied in \cite{SpeckEtAl2014_BIT}. 
Here, SDC iterations (called ``sweeps'' in this context) are performed alternately on a fine and on a coarse level in order to shift work load to coarser, i.e.~cheaper, levels.
These cheaper levels are obtained, e.g., by reducing the degrees-of-freedom in space or the order of the quadrature rule in time.
Therefore, MLSDC requires  suitable interpolation and restriction operators $\matr{T}_C^F$ and $\matr{T}_F^C$, and a coarse-grid correction in order to transfer information between the different levels.
As a consequence, MLSDC can be written as a FAS-multigrid-like iteration.
Like SDC it solves the collocation problem in an iterative manner, using the same initial iteration vector.
For our purpose we derive a two-level version from \cite{SpeckEtAl2014_BIT} as:

\begin{enumerate}
  \item Perform $n_F$ fine SDC sweep using the values $\vect{U}^{k}$ according to \eqref{eq:preconditioned_richardson}.
    This yields provisional updated values $\vect{U}^{*}$.
\item Sweep from fine to coarse:
  \begin{enumerate}
  \item Restrict the fine values $\vect{U}^{*}$ to the coarse
    values $\tilde{\vect{U}}^{k}$.
  \item Compute the FAS correction 
    $\vect{\tau}^{k+1} = \matr{\tilde M}\vect{\tilde U}^{k} - \matr{T}_C^F \matr{M} \vect{U}^{*}$
  \item Perform $n_C$ coarse SDC sweeps
    beginning with $\tilde{\vect{U}}^k$ and the FAS correction
    $\vect{\tau}^k$.  This yields new values $\tilde{\vect{U}}^{k+1}$
  \end{enumerate}
\item Sweep from coarse to fine :
    Compute the interpolated coarse correction $\vect{\delta}^k$ and
    add it to $\vect{U}^{*}$ to obtain $\vect{U}^{k+1}$
\end{enumerate}

Note that we use the FAS correction strategy here to match the description of~\cite{SpeckEtAl2014_BIT}.
This is just a question of notation, 
because in the linear case using this correction strategy 
is equivalent to the standard coarse-grid correction \cite{trottenberg2000multigrid}.
Note further, that we will only perform one fine and one coarse SDC sweep in each MLSDC iteration, i.e. $n_F = n_C = 1$.
The next lemma shows that we can cast this algorithm as a preconditioned Richardson iteration, too.

\begin{lemma}
  Let $\matr{T}_C^F \in \R^{ N M \times\tilde{N}\tilde{M}}$ and $\matr{T}_F^C \in \R^{\tilde{N}\tilde{M}\times NM}$ be the prolongation and restriction operators 
  which transfer information between the coarse and fine level.
  We describe the same problem on a fine space-time grid with the system matrix $\matr{M}$ and on a coarse space-time grid with $\tilde{\matr{M}}$. 
  For both levels we use a preconditioned Richardson iteration method, which
  is characterized by $\matr{P}$ and $\tilde{\matr{P}}$ to solve $\matr{M}\vect{U}=\vect{c}$ and $\matr{\tilde{M}}\vect{\tilde{U}} = \matr{T}_F^C \vect{c} = \vect{\tilde c}$, respectively.
  Then a combination of both methods using coarse-grid correction can be written as 
  \begin{align}
    \begin{split}
      \vect{U}^{k+\frac{1}{2}} &= \vect{U}^k + \matr{T}_C^F \matr{\tilde{P}}^{-1}_{\mathrm{SDC}}\matr{T}_F^C\left( \vect{U}^0 - \matr{M} \vect{U}^k \right)\\
      \vect{U}^{k+1} &= \vect{U}^{k+\frac{1}{2}} + \matr{P}^{-1}_{\mathrm{SDC}}\left( \vect{U}^0 - \matr{M} \vect{U}^{k+\frac{1}{2}}  \right)
    \end{split}
    \label{eq:two_level_fas_corrected}
  \end{align}
  It is possible to write \eqref{eq:two_level_fas_corrected} in form of \eqref{eq:preconditioned_richardson}, 
  using a new preconditioner $\vect{P}_{\mathrm{MLSDC}}$, where
\begin{align}
  \vect{P}_\mathrm{MLSDC}^{-1} = \matr{T}_C^F \matr{\tilde{P}}^{-1}_{\mathrm{SDC}}\matr{T}_F^C+\matr{P}^{-1}_{\mathrm{SDC}}-\matr{P}^{-1}_{\mathrm{SDC}} \matr{M} \matr{T}_C^F \matr{\tilde{P}}^{-1}_{\mathrm{SDC}}\matr{T}_F^C.
  \label{eq:nested_preconditioning_matrix}
\end{align}
Following \eqref{eq:it_matrix} and using $\vect{P}_{\mbox{SDC}}$ and $\vect{\tilde P}_{\mbox{SDC}}$ yields the MLSDC iteration matrix  
\begin{align}
  \matr{T}_{\mathrm{SDC}} =  \matr{I} - \left(\matr{T}_C^F \matr{\tilde{P}}_{\mathrm{SDC}}^{-1}\matr{T}_F^C + \matr{P}_{\mathrm{SDC}}^{-1} -\matr{P}_{\mathrm{SDC}}^{-1} \matr{M} \matr{T}_C^F \matr{\tilde{P}}_{\mathrm{SDC}}^{-1}\matr{T}_F^C \right)\matr{M}.
  \label{eq:mlsdc_it_matrix}
\end{align}

\label{lem:two_level_fas_corrected}
\end{lemma}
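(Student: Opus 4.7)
My plan is essentially an algebraic one: start from the two sub-steps in \eqref{eq:two_level_fas_corrected}, eliminate the intermediate iterate $\vect{U}^{k+\frac{1}{2}}$, and read off the combined preconditioner.

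First, I would introduce the residual $\vect{R}^k = \vect{U}^0 - \matr{M}\vect{U}^k$ and the shorthand $\matr{C} = \matr{T}_C^F \matr{\tilde P}_\mathrm{SDC}^{-1} \matr{T}_F^C$ for the coarse correction operator, so that the first line of \eqref{eq:two_level_fas_corrected} reads $\vect{U}^{k+\frac{1}{2}} = \vect{U}^k + \matr{C}\vect{R}^k$. A short calculation then gives the residual after the coarse step,
\begin{align*}
  \vect{U}^0 - \matr{M}\vect{U}^{k+\frac{1}{2}} = \vect{R}^k - \matr{M}\matr{C}\vect{R}^k = (\matr{I} - \matr{M}\matr{C})\vect{R}^k.
\end{align*}
Substituting this into the second line of \eqref{eq:two_level_fas_corrected} yields
\begin{align*}
  \vect{U}^{k+1} = \vect{U}^k + \matr{C}\vect{R}^k + \matr{P}_\mathrm{SDC}^{-1}(\matr{I} - \matr{M}\matr{C})\vect{R}^k = \vect{U}^k + \bigl(\matr{C} + \matr{P}_\mathrm{SDC}^{-1} - \matr{P}_\mathrm{SDC}^{-1}\matr{M}\matr{C}\bigr)\vect{R}^k.
\end{align*}
Comparing with the preconditioned Richardson form \eqref{eq:preconditioned_richardson} then identifies the bracketed operator as $\vect{P}_\mathrm{MLSDC}^{-1}$ and reproduces \eqref{eq:nested_preconditioning_matrix} after re-expanding $\matr{C}$.

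For the iteration matrix I would simply apply the general formula \eqref{eq:it_matrix}, namely $\matr{T} = \matr{I} - \vect{P}^{-1}\matr{M}$, with the $\vect{P}_\mathrm{MLSDC}^{-1}$ just derived, which immediately gives \eqref{eq:mlsdc_it_matrix}. No further manipulation is needed; in particular I would not attempt to factor the result further, since the three-term structure is exactly what is wanted for the later multigrid interpretation.

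The only subtle point — and the thing I would want to double-check explicitly — is the bookkeeping of the right-hand side on the coarse level. The algorithmic description in steps 1--3 above formulates the coarse sweep through an FAS correction $\vect{\tau}^{k+1} = \matr{\tilde M}\vect{\tilde U}^k - \matr{T}_F^C \matr{M}\vect{U}^*$, whereas the lemma as stated already assumes the equivalent linear coarse-grid-correction form with restricted residual $\matr{T}_F^C(\vect{U}^0 - \matr{M}\vect{U}^k)$. The paper has already noted that these are equivalent in the linear case, so I would invoke that remark rather than re-deriving it; the rest is routine manipulation and should not present a real obstacle.
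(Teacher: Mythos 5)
Your proposal is correct, and the core algebra---eliminating $\vect{U}^{k+\frac{1}{2}}$, writing the post-coarse residual as $(\matr{I}-\matr{M}\matr{C})\vect{R}^k$, and collecting $\matr{C}+\matr{P}_{\mathrm{SDC}}^{-1}-\matr{P}_{\mathrm{SDC}}^{-1}\matr{M}\matr{C}$ as $\vect{P}_{\mathrm{MLSDC}}^{-1}$---is exactly what the lemma needs. The one substantive difference from the paper is the division of labor: the paper's proof spends nearly all of its effort on the step you delegate to the earlier remark, namely showing that the FAS-corrected coarse sweep $\vect{\tilde U}^{k+1}=\vect{\tilde U}^k+\matr{\tilde P}_{\mathrm{SDC}}^{-1}(\vect{\tilde c}+\vect{\tau}^k-\matr{\tilde M}\vect{\tilde U}^k)$ with $\vect{\tilde U}^k=\matr{T}_F^C\vect{U}^k$ and $\vect{\tau}^k=\matr{\tilde M}\matr{T}_F^C\vect{U}^k-\matr{T}_F^C\matr{M}\vect{U}^k$ collapses (the $\matr{\tilde M}\matr{T}_F^C\vect{U}^k$ terms cancel) to the restricted-residual form $\vect{U}^{k+\frac{1}{2}}=\vect{U}^k+\matr{T}_C^F\matr{\tilde P}_{\mathrm{SDC}}^{-1}\matr{T}_F^C(\vect{U}^0-\matr{M}\vect{U}^k)$, and then dismisses your explicit computation as ``simple algebraic manipulations.'' You invert this emphasis: you take the first line of \eqref{eq:two_level_fas_corrected} as given on the strength of the FAS/coarse-grid-correction equivalence in the linear case and carry out the elimination in full. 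Since the lemma's claim is that ``a combination of both methods using coarse-grid correction can be written as \eqref{eq:two_level_fas_corrected},'' the one-line cancellation establishing that first display is really part of the statement being proved, so I would include it rather than only cite the remark; it costs two lines and makes your argument a strict superset of the paper's. Everything else is fine.
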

\begin{proof}
  Let $\vect{U}^k$ be the result of the last iteration on the fine level. 
  For the proof we start in the middle of the algorithm.
  First we compute the FAS correction 
  \begin{align}\label{eq:fas_mlsdc}
    \vect{\tau}^k = \matr{\tilde{M}}\matr{T}_F^C \vect{U}^k - \vect{T}_F^C \matr{M}\vect{U}^k
  \end{align}
and use it to modify $\vect{\tilde{c}}$ for the next iteration on the coarse level. 
We start the iteration on the coarse level with
\begin{align*}
  \vect{\tilde{U}}^{k+1} 
  = \vect{\tilde{U}}^{k} + \matr{\tilde{P}}^{-1}_{\mathrm{SDC}}\left( \vect{\tilde{c}} + \vect{\tau}^k - \matr{\tilde{M}} \vect{\tilde{U}}^{k} \right)   
  = \matr{T}_F^C \vect{U}^k + \matr{\tilde{P}}^{-1}_{\mathrm{SDC}} \matr{T}_F^C  \left( \vect{c}  - \matr{M}\vect{U}^k \right),
\end{align*}
with the restricted value $\vect{\tilde{U}}^{k} = \matr{T}_F^C \vect{U}^k$.
Then, we compute the coarse correction 
$$ \vect{\delta}^k = \matr{T}_C^F\left( \vect{\tilde{U}}^{k+1} - \matr{T}_F^C \vect{U}^k \right) $$
and obtain the half-step 
\begin{align*}
    \vect{U}^{k+\frac{1}{2}} 
  = \vect{U}^k + \vect{\delta}^k 
  = \vect{U}^k + \matr{T}_C^F \matr{\tilde{P}}^{-1}\matr{T}_F^C\left( \vect{U}^0 - \matr{M} \vect{U}^k \right)
\end{align*}
after some algebraic manipulations.
Using this half-step for the next iteration on the fine level gives \eqref{eq:two_level_fas_corrected}. 
Simple algebraic manipulations, after inserting the half-step into the second step, 
yield the preconditioner \eqref{eq:nested_preconditioning_matrix}, which immediately leads to the iteration matrix \eqref{eq:mlsdc_it_matrix}.
\end{proof}

For the matrix formulation it is irrelevant whether the MLSDC step starts with the computation on the fine or the coarse level.
To comply with the literature, we leave the algorithm of MLSDC in the original order, while changing the order for the matrix formulation.

As a part of PFASST, MLSDC corresponds to the computation performed on each subinterval. 
Adding a communication framework between the MLSDC iterations performed on each subinterval 
leads to PFASST. However, adding the communication framework yields a structure  
similar to the one we have seen in Lemma~\ref{lem:two_level_fas_corrected}.

\subsection{The PFASST algorithm}
\label{ch:pfasst_algorithm}

\begin{figure}[th]
	\centering	
	\includegraphics[scale=1]{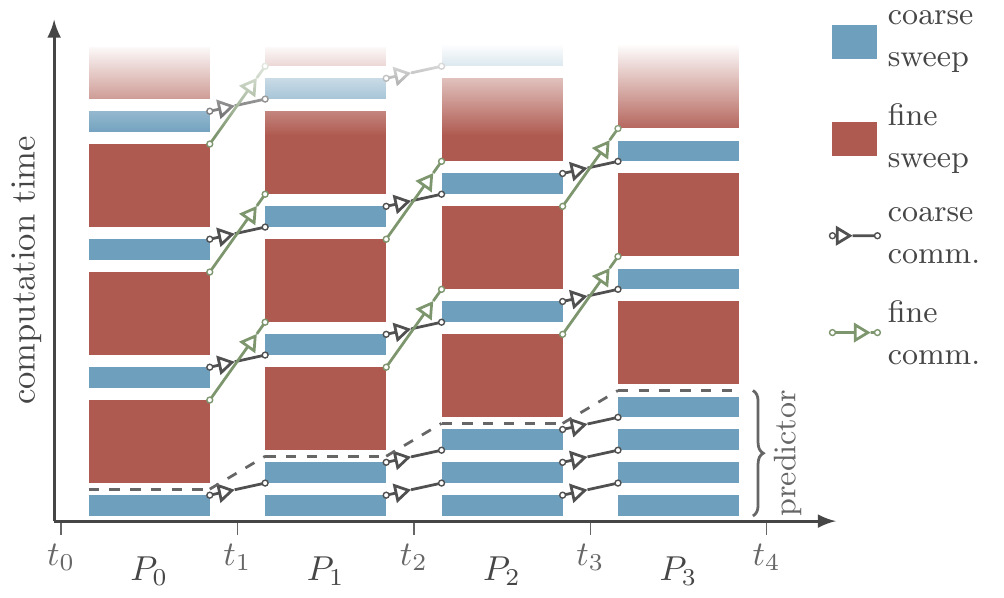}
	\caption{
	  Schematic representation of the PFASST algorithm with two levels and four processes $P_0,...,P_3$ handling four parallel time steps. 
	  Created using \texttt{pfasst-tikz}~\cite{pfasst_tikz}.}
	\label{fig:pfasst_full}
\end{figure}

The time-parallel PFASST algorithm in its final form was introduced in \cite{EmmettMinion2012} as a 
combination of SDC methods \cite{DuttEtAl2000} with Parareal \cite{LionsEtAl2001} 
using an FAS correction strategy to allow for efficient spatial coarsening along the level hierarchy.

We explain PFASST on the basis of the schematic representation in Figure~\ref{fig:pfasst_full}. 
First of all, we see the time domain, decomposed into subintervals, on the x-axis. 
On the y-axis we see the elapsed computational time.
Each processor is assigned to a subinterval, where it performs MLSDC iterations and sends intermediate results on each level to the next processor.
The blue and red blocks represent the SDC sweeps on the coarse and fine level. 
These blocks are connected through FAS corrections to the subjacent blocks (red to blue). 
The arrows represent the communication between the processors. 
Before starting with the actual PFASST iterations, a prediction phase, represented by the first blue blocks near the x-axis, computes suitable initial values for the iterations to come. 

Based on the schematic representation and the full algorithm description in \cite{EmmettMinion2012}, we state a two-level version without the prediction phase.
Let $\xvect{U}_{[t_{l-1},t_l],m}^k$ be the value on the $l$-th subinterval at the $k$-th iteration and the $m$-th node. 
We have
$$\vect{F}_{[t_{l-1},t_l]}^k = [\matr{A}\xvect{U}_{[t_{l-1},t_l],1}^k,\ldots,\matr{A}\xvect{U}_{[t_{l-1},t_l],M_l}^k] \; \mbox{and} \; \vect{U}_{[t_{l-1},t_l]}^k = \left[ \xvect{U}_{[t_{l-1},t_l],1}^k, \ldots, \xvect{U}_{[t_{l-1},t_l],M_l}^k  \right],$$
where $M_l$ is the number of nodes on the $l$-th interval.
An upper bar, e.g. $\bar{\xvect{U}}_{l-1}^{k+1}$, 
indicates that this value was sent by the preceding processor. 
These values are used as a new right-hand side to the collocation problem on the following subinterval. 
Denote the initial values for each subinterval as $\xvect{U}_{[t_{l-1},t_l],m}^0$.
Prepared with this notations, we are ready to formulate the PFASST algorithm:
\begin{enumerate}
\item Go down to the coarse level:
  \begin{enumerate}
     \item Restrict the fine values $\vect{U}_{[t_{l-1},t_l]}^{k}$ to the coarse values $\vect{\tilde{U}}_{[t_{l-1},t_l]}^{k}$ and compute $\vect{\tilde{F}}_{[t_{l-1},t_l]}^k$.
     \item Compute FAS correction $\vect{\tau}^k$, using $\vect{\tilde{F}}_{[t_{l-1},t_l]}^k$ and $\vect{F}_{[t_{l-1},t_l]}^{k}$.
     \item If $l>0$, then receive the new initial value $\xvect{\tilde{\bar{U}}}_l^{k}$ from processor $\mathbf{P}_{l-1}$ and compute $\vect{\tilde{F}}_{[t_{l-1},t_l],0}^k$, else use the initial value of the ODE. 
     \item Perform $n_C$ SDC sweeps with values $\vect{\tilde{U}}_{[t_{l-1},t_l]}^k$, $\vect{\vect{\tilde{F}}}_{[t_{l-1},t_l]}^{k}$ and the FAS correction $\vect{\tau}^k$.
       This yields new values $\vect{\tilde{U}}_{[t_{l-1},t_l]}^{k+\frac{1}{2}}$ and $\vect{\tilde{F}}_{[t_{l-1},t_l]}^{k+\frac{1}{2}}$.
     \item Send $\xvect{\tilde{U}}_{[t_{l-1},t_l],M_l}^{k+\frac{1}{2}}$ to processor $\mathbf{P}_{l+1}$ if $l<N-1$. 
           This will be received as the new initial condition $\tilde{\bar{\xvect{U}}}_{l}^{k}$  for the solver on the coarse level. 
  \end{enumerate}
\item Return to the fine level:
\begin{enumerate}
  \item Interpolate the coarse correction $\vect{\delta}^k = \vect{\tilde{U}}_{[t_{l-1},t_l]}^{k+\frac{1}{2}} - \vect{\tilde{U}}_{[t_{l-1},t_l]}^k$ 
    and add to $\vect{U}_{[t_{l-1},t_l]}^{k}$, yielding $\vect{U}_{[t_{l-1},t_l]}^{k+\frac{1}{2}}$. 
    Recompute $\vect{F}_{[t_{l-1},t_l]}^{k+\frac{1}{2}}$.
  \item If $l>0$, then receive the new initial value $\xvect{\bar{U}}_{l-1}^{k}$ from processor $\mathbf{P}_{l-1}$, else take the initial value of the ODE. 
  \item Interpolate coarse correction vector $\xvect{\delta}^k = \tilde{\bar{\xvect{U}}}_{l-1}^{k+\frac{1}{2}} - \tilde{\bar{\xvect{U}}}_{l-1}^{k}$ and add it to  $\bar{\xvect{U}}_{l}^{k}$, 
    yielding $\bar{\xvect{U}}_{l}^{k+\frac{1}{2}}$.
    Recompute $\xvect{F}_{[t_{l-1},t_l],1}^{k+\frac{1}{2}}$.
\end{enumerate}
  \item Perform $n_F$ fine SDC sweeps using the values $\vect{U}_{[t_{l-1},t_l]}^{k+\frac{1}{2}}$ and $\vect{F}_{[t_{l-1},t_l]}^{k+\frac{1}{2}}$. 
    This yields values $\vect{U}_{[t_{l-1},t_l]}^{k+1}$ and $\vect{F}_{[t_{l-1},t_l]}^{k+1}$.
  \item Send $\xvect{U}_{[t_{l-1},t_l],M_l}^{k+1}$ to processor $\mathbf{P}_{l+1}$ if $l<N-1$. 
        This will be used as initial value $\bar{\xvect{U}}_{l+1}^{k+1}$ in the next iteration on the fine level. 
\end{enumerate}

This form of the PFASST algorithm is suitable for implementation, 
but rather not for the mathematical analysis. 
It is especially difficult to capture how the parts influence each other.
To overcome this limitation, we now change the perspective: Instead of building the algorithm in a ``vertical'' way (MLSDC on each subinterval), 
we look at all intervals at once in a ``horizontal'' way, i.e., we analyze how the different components of PFASST act on the full time-domain $[t_0,T]$.

\section{A multigrid perspective}
\label{ch:multigrid_perspective}
In this section, the perspective is shifted from solvers on one specific subinterval to the interaction of the solvers on the whole time domain $\left[ t_0,T \right]$. 
We begin with stating the composite collocation problem. 

\begin{definition}
  Let the interval $\left[ t_0,T \right]$ be decomposed as in \eqref{eq:time_subintervals} into $L$ subintervals $\left[ t_{l}, t_{l+1} \right]$. 
  On each subinterval a collocation problem in the form of \eqref{eq:colloc_problem}, 
  denoted by $\matr{M}_{[t_l,t_{l+1}]}$, is posed.
  The collocation matrix on the whole time domain is then defined as
\begin{align*}
  \matr{M}_{[t_0,T]} &= 
  \begin{pmatrix}
    \matr{M}_{[t_0,t_1]} &  & & \\
    -\matr{N}		 & \matr{M}_{[t_1,t_2]} & & \\
    			 & \ddots           & \ddots & \\
			 &  &  -\matr{N} & \matr{M}_{[t_{L-1},T]}
  \end{pmatrix}\in \R^{NML}, 
		       \; \mbox{with} \;    \\
   \matr{N} &=  
    \begin{pmatrix}
      0 & 0 & \cdots & 1 \\
      0 & 0 & \cdots & 1 \\
       \vdots & \vdots & & \vdots \\
      0 & 0 & \cdots & 1 \\
    \end{pmatrix} \otimes \matr{I}_{N} \in \R^{NM}.
\end{align*}
The operator $\matr{N}$ handles how the new starting value for the upcoming interval is produced.
Furthermore, stacking together
  \begin{align*}
    \vect{c}_{[t_l,t_{l+1}]} &=
    \begin{cases}
      \vect{U}_0, &\text{ for } l=0\\
      \vect{0},  &\text{ for } l>0
    \end{cases} \in \R^{NM},
  \end{align*}
form the righ-hand side $\vect{c}_{[t_0,T]}$ for the \textbf{composite collocation problem}
  \begin{align}
    \matr{M}_{\left[ t_0,T \right]} 
    \begin{pmatrix}
      \vect{U}_{\left[ t_0,t_{1} \right]}\\
 	\vect{U}_{\left[ t_1,t_{2} \right]}\\
	\vdots\\
	\vect{U}_{\left[ t_{L-1},T\right]}
    \end{pmatrix}
    = 
    \begin{pmatrix}
      \vect{U}_0 \\
 	0	\\
	\vdots	\\
	0	\\	
      \end{pmatrix} = \vect{c}_{[t_0,T]}.
    \label{eq:combined_colloc_problem}
  \end{align}
\label{def:comb_colloc_problem}
\end{definition}

Along with the definition, the block structure of our problem becomes evident. On the diagonal of the new collocation matrix, 
we find blocks of the size $NM$, each of them being associated with the subintervals $\left[ t_l,t_{l+1} \right]$. 
The operators on the subdiagonal deal with the communication between two adjacent subintervals.
When designing iterative solvers for the composite collocation problem, we also want to exploit this block structure.
Therefore, the next two sections are dedicated to the block versions of an approximate Jacobi and a approximate Gau\ss-Seidel iteration
and both will emerge from the interpretation of SDC as an approximate Gau\ss-Seidel iterative solver.
Later on, both methods, if correctly interlaced, will yield PFASST.

\subsection{Approximative Block Gau\ss -Seidel solver}
\label{sec:block_gauss_seidel}
The classical Gau\ss-Seidel solver is a splitting method, which incorporates the lower triangular part of the system matrix as preconditioner.
Obviously this strategy is possible in principle for the composite collocation problem, as defined in Definition~\ref{def:comb_colloc_problem}, but this would neglect the particular block structure of the problem.
Therefore, we now construct a block version of the SDC iteration, following its description as an approximate Gau\ss-Seidel solver.

Assume we perform one SDC sweep on each subinterval via
\begin{align}
  \vect{U}^{k+1}_{\left[ t_l,t_{l+1} \right]} &= \vect{U}^k_{\left[ t_l,t_{l+1} \right]} 
  + \matr{P}_{\left[t_l,t_{l+1}\right]}^{-1} \left( \vect{c}^{k+1}_{\left[t_l,t_{l+1}\right]} 
	- \matr{M}_{\left[ t_l,t_{l+1} \right]}\vect{U}^k_{\left[ t_l,t_{l+1} \right]} \right),
  \label{eq:it_ms_step_one_subinterval}
\end{align}
where $\matr{P}_{\left[t_l,t_{l+1}\right]}$ denotes the SDC preconditioner \eqref{eq:sdc_preconditioning_matrix}, 
and $\vect{c}^k_{\left[t_l,t_{l+1}\right]}$ is the right-hand side on the $l$-th subinterval in the $k$-th iteration.
In order to pass the last value forward in time to the next subinterval, we can use the matrix $\matr{N}$. 
Therefore, the right-hand side of the collocation problem can be written as 
\begin{align}
\begin{split}
  \vect{c}^k_{\left[t_0,t_{1}\right]}   &= \left[ \xvect{U}_0,\ldots, \xvect{U}_0   \right], \quad\mbox{for}\quad l=0 \;\mbox{and}\; k>0 \\
  \vect{c}^k_{\left[t_l,t_{l+1}\right]} &= \left[ \bar{\xvect{U}}^{k}_l,\ldots, \bar{\xvect{U}}^{k}_l  \right]= \matr{N}\vect{U}^k_{\left[ t_{l-1},t_l \right]} \quad\mbox{for}\quad l>0 \;\mbox{and}\; k>1\\
  \label{eq:it_ms_step_pass_information}
\end{split}
\end{align}
For some initial iteration vector $\vect{U}^{0}_{\left[ t_l, t_{l+1} \right]}$, stemming, e.g., from copying the initial value on each node of each subinterval (``spreading''), we can write this process compactly as single approximate Gau\ss-Seidel step over the whole time domain.

\begin{lemma}
  Let $\matr{M}_{\left[ t_0,T \right]}$ be the matrix of a composite collocation problem. 
  Using \eqref{eq:it_ms_step_one_subinterval} on each subinterval and passing the results via \eqref{eq:it_ms_step_pass_information}, corresponds to 
  \begin{align}
    \vect{U}^{k+1}_{[t_0,T]} =\vect{U}^k_{[t_0,T]}  + \matr{P}^{-1}_{\left[ t_0,T \right]} \left( \vect{c}_{\left[ t_0,T \right]} - \matr{M}_{\left[ t_0,T \right]} \vect{U}^k_{[t_0,T]}  \right),
  \label{eq:composite_iteration_form}
\end{align}
with
\begin{align*}
  \vect{U}^k_{[t_0,T]} =  \begin{pmatrix}
      \vect{U}_{\left[ t_0,t_{1} \right]}^k\\
 	\vect{U}_{\left[ t_1,t_{2} \right]}^k\\
	\vdots\\
	\vect{U}_{\left[ t_{L-1},T\right]}^k
    \end{pmatrix}
    \in \R^{NML}
    ,\quad \vect{c}_{\left[ t_0,T \right]} = 
    \begin{pmatrix}
      \vect{U}_0 \\
      0 \\
      \vdots\\
      0
    \end{pmatrix}
    \in \R^{NML}
\end{align*}
and
\begin{align*}
  \matr{P}_{[t_0,T]} = 
  \begin{pmatrix}
    \matr{P}_{[t_0,t_1]} &  & & \\
    -\matr{N}		 & \matr{P}_{[t_1,t_2]} & & \\
    			 & \ddots           & \ddots & \\
			 &  &  -\matr{N} & \matr{P}_{[t_{L-1},T]}
  \end{pmatrix} \in \R^{NML\times NML}.
\end{align*}
  \label{lem:it_multistep_solver}
\end{lemma}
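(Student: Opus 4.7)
My plan is to unpack the compact block Richardson iteration~\eqref{eq:composite_iteration_form} row by row and show that each block row coincides with the local update~\eqref{eq:it_ms_step_one_subinterval} fed by the communication rule~\eqref{eq:it_ms_step_pass_information}. First I would rewrite~\eqref{eq:composite_iteration_form} in the equivalent form
$$\matr{P}_{[t_0,T]}\vect{U}^{k+1}_{[t_0,T]} = \bigl(\matr{P}_{[t_0,T]} - \matr{M}_{[t_0,T]}\bigr)\vect{U}^{k}_{[t_0,T]} + \vect{c}_{[t_0,T]},$$
which is convenient because $\matr{P}_{[t_0,T]}$ and $\matr{M}_{[t_0,T]}$ share exactly the same subdiagonal $-\matr{N}$ blocks, so that the difference $\matr{P}_{[t_0,T]} - \matr{M}_{[t_0,T]}$ is block-diagonal with diagonal entries $\matr{P}_{[t_l,t_{l+1}]} - \matr{M}_{[t_l,t_{l+1}]}$.

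Next I would read off the block rows. For $l=0$, the first block row reads $\matr{P}_{[t_0,t_1]}\vect{U}^{k+1}_{[t_0,t_1]} = (\matr{P}_{[t_0,t_1]} - \matr{M}_{[t_0,t_1]})\vect{U}^{k}_{[t_0,t_1]} + \vect{U}_0$, which rearranges to exactly~\eqref{eq:it_ms_step_one_subinterval} with $\vect{c}^{k+1}_{[t_0,t_1]} = \vect{U}_0$, matching the first case of~\eqref{eq:it_ms_step_pass_information}. For $l \geq 1$ the $l$-th block row is
$$-\matr{N}\vect{U}^{k+1}_{[t_{l-1},t_l]} + \matr{P}_{[t_l,t_{l+1}]}\vect{U}^{k+1}_{[t_l,t_{l+1}]} = \bigl(\matr{P}_{[t_l,t_{l+1}]} - \matr{M}_{[t_l,t_{l+1}]}\bigr)\vect{U}^{k}_{[t_l,t_{l+1}]},$$
which, after moving $\matr{N}\vect{U}^{k+1}_{[t_{l-1},t_l]}$ to the right and applying $\matr{P}^{-1}_{[t_l,t_{l+1}]}$, reproduces~\eqref{eq:it_ms_step_one_subinterval} with the identification $\vect{c}^{k+1}_{[t_l,t_{l+1}]} = \matr{N}\vect{U}^{k+1}_{[t_{l-1},t_l]}$, i.e.\ the second case of~\eqref{eq:it_ms_step_pass_information}.

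The only point that deserves attention is that the subdiagonal $-\matr{N}$ inside $\matr{P}_{[t_0,T]}$ forces block forward-substitution when inverting the preconditioner, so that the newly computed value on subinterval $l-1$ --- not the previous iterate --- enters the update on subinterval $l$. This is precisely the Gauss-Seidel flavour one expects, and it is what allows the global block Richardson step to reproduce the sequential sweep-and-communicate procedure. A short induction on $l$ from $0$ to $L-1$ formalises this; beyond that the argument is just bookkeeping of block rows together with the cancellation of the subdiagonal, and I do not anticipate any real obstacle.
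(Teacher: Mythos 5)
Your proposal is correct and follows essentially the same route as the paper's (very terse) proof: multiply the global iteration by $\matr{P}_{[t_0,T]}$ from the left and compare block rows with the local update multiplied by $\matr{P}_{[t_l,t_{l+1}]}$. Your version usefully makes explicit the key cancellation --- that $\matr{P}_{[t_0,T]}-\matr{M}_{[t_0,T]}$ is block diagonal because both matrices share the subdiagonal $-\matr{N}$ --- and that forward substitution through the subdiagonal is what injects $\matr{N}\vect{U}^{k+1}_{[t_{l-1},t_l]}$ rather than the old iterate, but this is elaboration, not a different argument.
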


\begin{proof}
  We multiply equation \eqref{eq:it_ms_step_one_subinterval} with $\matr{P}_{[t_l,t_{l+1}]}$ from the left
  and equation \eqref{eq:composite_iteration_form} with $\matr{P}_{[ t_0,T]}$ from the left. Comparing the resulting terms line by line reveals the equivalence.
\end{proof}

This Gauss-Seidel-like iteration can be found in Fig.~\ref{fig:pfasst_full}: Here, after each blue block which represent SDC sweeps on the coarse level, the values $\bar{\xvect{U}}_l^k$ are passed forward in time, providing new initial values for the sweep on the next interval.
Thus, the iteration on the coarse level can be identified with an approximate block Gau\ss-Seidel iteration for the composite collocation problem~\eqref{eq:combined_colloc_problem}.


\subsection{Block Jacobi-Solver}
\label{sec:block_jacobi_solver}

The communication, emerging from the use of the approximate Block Gau\ss -Seidel solver, is blocking. 
Each processor has to wait for its predecessor. 
Hence, this is a purely serial approach.
A simple way to avoid the blocking communication is to use a approximate Block Jacobi solver, omitting the sub diagonal blocks responsible for the communication.

Assume we perform a step similar to \eqref{eq:it_ms_step_one_subinterval},
but we use the right-hand side
\begin{align}
  \begin{split}
  \vect{c}^k_{\left[t_0,t_{1}\right]}   &= \left[ \xvect{U}_0,\ldots, \xvect{U}_0   \right], \quad \mbox{for}\quad l=0 \; \mbox{and}\; k>0 \\
  \vect{c}^k_{\left[t_l,t_{l+1}\right]} &= \left[ \bar{\xvect{U}}^{k-1}_l,\ldots, \bar{\xvect{U}}^{k-1}_l  \right]= \matr{N}\vect{U}^{k-1}_{\left[ t_{l-1},t_l \right]} \quad \mbox{for}\quad l>0 \;\mbox{and}\; k>1
  \label{eq:jacobi_step_pass_information}
  \end{split}
\end{align}
instead. This means that not the result of the current but of the previous iteration of the preceding interval is used. 
In the first iteration, the result of the prediction phase is used. 
Using the simple spreading prediction phase, this is easily achieved by choosing $\bar{\xvect{U}}^{0}_l = \xvect{U}_0$.


\begin{lemma}
  Let $\matr{M}_{\left[ t_0,T \right]}$ be the matrix of a composite collocation problem. 
  Then, using \eqref{eq:it_ms_step_one_subinterval} on each subinterval and passing the results via \eqref{eq:jacobi_step_pass_information}, corresponds to 
\begin{align}
  \vect{U}^{k+1}_{[t_0,T]} =\vect{U}^k_{[ t_0,T]}  + \matr{\hat{P}}^{-1}_{\left[ t_0,T \right]} \left( \vect{c}_{\left[ t_0,T \right]} - \matr{M}_{\left[ t_0,T \right]} \vect{U}^k_{[t_0,T]}  \right)  
  \label{eq:it_bl_solver}
\end{align}
with 
\begin{align*}
  \matr{\hat{P}}_{[t_0,T]} = 
  \begin{pmatrix}
    \matr{P}_{[t_0,t_1]} &  & & \\
     		 & \matr{P}_{[t_1,t_2]} & & \\
    			 &  & \ddots & \\
			 &  &   & \matr{P}_{[t_{L-1},T]}
  \end{pmatrix},
\end{align*}
as well as $\vect{U}^k_{[t_0,T]}$ and $\vect{c}_{\left[ t_0,T \right]}$ defined as in Lemma \ref{lem:it_multistep_solver}.
\label{lem:it_bl_solver}
\end{lemma}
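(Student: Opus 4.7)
The plan is to mirror the argument used for Lemma \ref{lem:it_multistep_solver}: unpack the composite iteration \eqref{eq:it_bl_solver} into its block rows and verify that each row reproduces the local SDC step \eqref{eq:it_ms_step_one_subinterval} driven by the Jacobi passing rule \eqref{eq:jacobi_step_pass_information}. The crucial structural observation is that, in contrast to the Gau\ss--Seidel case, the subdiagonal $-\matr{N}$ blocks of the system matrix $\matr{M}_{[t_0,T]}$ are \emph{not} absorbed into the preconditioner; since $\hat{\matr{P}}_{[t_0,T]}$ is block diagonal, these terms remain on the right-hand side as part of the residual and play the role of the transfer operators.

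First I would multiply \eqref{eq:it_bl_solver} from the left by $\hat{\matr{P}}_{[t_0,T]}$, which is harmless since it is block diagonal with invertible diagonal blocks $\matr{P}_{[t_l,t_{l+1}]}$. Reading off the $l$-th block row and using
\begin{align*}
  \left(\matr{M}_{[t_0,T]}\vect{U}^k_{[t_0,T]}\right)_l
  = -\matr{N}\vect{U}^{k}_{[t_{l-1},t_l]} + \matr{M}_{[t_l,t_{l+1}]}\vect{U}^k_{[t_l,t_{l+1}]}
\end{align*}
for $l>0$, together with the vanishing block of $\vect{c}_{[t_0,T]}$ for $l>0$, yields
\begin{align*}
  \matr{P}_{[t_l,t_{l+1}]}\vect{U}^{k+1}_{[t_l,t_{l+1}]}
  = \matr{P}_{[t_l,t_{l+1}]}\vect{U}^{k}_{[t_l,t_{l+1}]}
   + \matr{N}\vect{U}^{k}_{[t_{l-1},t_l]}
   - \matr{M}_{[t_l,t_{l+1}]}\vect{U}^k_{[t_l,t_{l+1}]}.
\end{align*}
I would then identify $\matr{N}\vect{U}^{k}_{[t_{l-1},t_l]}$ with the right-hand side $\vect{c}^{k+1}_{[t_l,t_{l+1}]}$ prescribed by \eqref{eq:jacobi_step_pass_information} (with the index shift $k\mapsto k+1$ so that $k-1\mapsto k$), which recovers exactly \eqref{eq:it_ms_step_one_subinterval} on that subinterval.

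The case $l=0$ is handled separately but analogously: the first block row of $\matr{M}_{[t_0,T]}\vect{U}^k_{[t_0,T]}$ contains only $\matr{M}_{[t_0,t_1]}\vect{U}^k_{[t_0,t_1]}$, while the first block of $\vect{c}_{[t_0,T]}$ equals $\vect{U}_0$; this matches \eqref{eq:it_ms_step_one_subinterval} with $\vect{c}^{k+1}_{[t_0,t_1]}=[\xvect{U}_0,\ldots,\xvect{U}_0]$. Collecting the $L$ identities and reversing the multiplication by $\hat{\matr{P}}_{[t_0,T]}$ establishes equivalence in both directions.

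The main obstacle is purely notational: one must be careful that the iteration index on the transferred value is $k$ (the previous iterate) rather than $k+1$, which is precisely the feature distinguishing Jacobi from Gau\ss--Seidel and which manifests itself here through the absence of $-\matr{N}$ in $\hat{\matr{P}}_{[t_0,T]}$. Once this index bookkeeping is pinned down, the block-by-block comparison is essentially the same algebraic verification as in the proof of Lemma \ref{lem:it_multistep_solver}.
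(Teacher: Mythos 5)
Your proposal is correct and follows essentially the same route as the paper: a block row-wise comparison of the composite iteration with the local SDC steps, observing that the block-diagonal $\hat{\matr{P}}_{[t_0,T]}$ leaves the subdiagonal $-\matr{N}$ blocks of $\matr{M}_{[t_0,T]}$ in the residual, so that the $l$-th row reproduces \eqref{eq:it_ms_step_one_subinterval} with the Jacobi right-hand side $\matr{N}\vect{U}^{k}_{[t_{l-1},t_l]}$. The index bookkeeping you flag ($c^{k+1}$ corresponding to $\bar{\xvect{U}}^{k}_l$, i.e.\ the previous iterate) matches the paper's identification of $\matr{N}\vect{U}^k_{[t_{l-1},t_l]}$ with $\matr{1}_M\otimes\bar{\xvect{U}}^{k-1}_l$.
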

\begin{proof}
  Similar to the proof in Lemma \ref{lem:it_multistep_solver} a block line-wise comparison yields the equivalence.
  Especially, the influence of the sub diagonal of $\matr{M}_{[t_0,T]}$ on the communication is revealed by a block line-wise view on \eqref{eq:it_bl_solver}:
\begin{align*}
  \vect{U}^{k+1}_{[t_0,t_{1}]} &=\vect{U}^k_{[t_0,t_{1}]}  + \matr{P}^{-1}_{\left[ t_0,t_{1} \right]} 
  \left( \vect{U}_0   - \matr{M}_{\left[ t_0,t_{1} \right]} \vect{U}^k_{[t_0,t_{1}]}\right), &\mbox{for } l=0 \\ 
  \vect{U}^{k+1}_{[t_l,t_{l+1}]} &=\vect{U}^k_{[t_l,t_{l+1}]}  + \matr{P}^{-1}_{\left[ t_l,t_{l+1} \right]} 
  \left(  \matr{N}\vect{U}^k_{[t_{l-1},t_{l}]}   - \matr{M}_{\left[ t_l,t_{l+1} \right]} \vect{U}^k_{[t_l,t_{l+1}]}\right) , &\mbox{for } l > 1.
\end{align*}
The values $\matr{N}\vect{U}^k_{[t_{l-1},t_{l}]}$ are equivalent to $\matr{1}_M \otimes \bar{\xvect{U}}^{k-1}_l$.
\end{proof}

It is evident that due to the block diagonal structure of $\matr{\hat{P}}_{[t_0,T]}$ 
one block Jacobi iteration may be performed concurrently on $L$ computing units.
This approach corresponds to the sweeps on the fine (red) blocks in Fig.~\ref{fig:pfasst_full}: these sweeps can be performed in parallel, since they do not depend on the previous subinterval at the same iteration.
Therefore, the iteration on the fine level can be identified with an approximate block Jacobi iteration for the composite collocation problem~\eqref{eq:combined_colloc_problem}.

\subsection{Assembling PFASST}
\label{ch:assembling_pfasst}
Already in Section \ref{ch:mlsdc} multigrid elements where introduced to SDC to form MLSDC. 
The same ideas apply when we now interlace both iterative block solvers from above.
In order to achieve more parallelism, we compute the approximate Gau\ss-Seidel iteration step on the coarse level and the approximate
block Jacobi iteration step on the fine level, so that the more cost intensive work is done in parallel.
As the following Theorem shows, it is now possible to write PFASST in the form of \eqref{eq:two_level_fas_corrected} 
and we are able to state a iteration matrix.

\begin{theorem}
  Let $\matr{T}_F^C$ and $\matr{T}_C^F$ be block-wise defined transfer operators, which treat the subintervals independently from each other,
  let $\left\{ \matr{P}_{\left[ t_0,t_1 \right]},\ldots,\matr{P}_{\left[ t_{L-1},T \right]} \right\}$ and $\left\{ \matr{\tilde{P}}_{\left[ t_0,t_1 \right]},\ldots,\matr{\tilde{P}}_{\left[ t_{L-1},T \right]} \right\}$ be sets of 
  preconditioner for the fine and coarse level, respectively, describing SDC sweeps on $\left[ t_l, t_{l+1} \right]$ for $l\in \left\{ 0,\ldots, L-1 \right\}$ and $t_L = T$.
  Let $\matr{M}_{\left[ t_0,T \right]}$ be the composite collocation matrix of Definition \ref{def:comb_colloc_problem} and $\matr{N}$, $\matr{\tilde{N}}$ 
  be the operations to compute the initial value for the 
  following subinterval. 
  Then the linear two-level version of PFASST can be written in matrix form as 
  \begin{align}
    \begin{split}
    \vect{U}^{k+\frac{1}{2}}_{[ t_0,T]} &= 
    \vect{U}^{k}_{[t_0,T]} + 
    \matr{T}_C^F \matr{\tilde{P}}_{[t_0,T]}^{-1} \matr{T}_F^C \left( \vect{c}_{[t_0,T]} - \matr{M}_{[t_0,T]} \vect{U}^{k}_{[t_0,T]} \right)\\
   \vect{U}^{k+1}_{[t_0,T]} 
    &= 
    \vect{U}^{k}_{[t_0,T]} + 
    \matr{\hat{P}}_{[t_0,T]}^{-1} \left( \vect{c}_{[t_0,T]}  - \matr{M}_{[t_0,T]}\vect{U}^{k+\frac{1}{2}}_{[t_0,T]} \right),
    \end{split}
    \label{eq:pfasst_in_matrix_form}
  \end{align}
  with $\matr{\tilde{P}}_{[t_0,T]}$, as in Lemma \ref{lem:it_multistep_solver}, and $\matr{\hat{P}}_{[t_0,T]}$, as in Lemma \ref{lem:it_bl_solver}.
  In addition, let $\matr{N},\matr{\tilde{N}}$, such that $\matr{\tilde{N}}\matr{T}_F^C  = \matr{T}_F^C \matr{N}$ and  
  $\vect{c}_{\left[ t_0,T \right]}= [\vect{U}^0, 0 ,\ldots , 0 ]$ as well as $\vect{\tilde{c}}_{\left[ t_0,T \right]} = \matr{T}_F^C \vect{c}_{\left[ t_0,T \right]}$. 
  Finally, following \eqref{eq:it_matrix}, the PFASST iteration matrix is given by 
  \begin{align}
    \matr{T}_{\mathrm{PFASST}} = \left( \matr{I} - \matr{\hat{P}}_{[t_0,T]}^{-1}\matr{M}_{[t_0,T]} \right) \left( \matr{I} - \matr{T}_C^F \matr{\tilde{P}}^{-1} \matr{T}_F^C \matr{M}_{[t_0,T]}\right).
    \label{eq:iteration_matrix_pfasst}
  \end{align}
  \label{th:pfasst_in_matrix_form}
\end{theorem}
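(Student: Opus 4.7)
The plan is to view PFASST on the whole time domain as the composition of two preconditioned Richardson half-steps on the composite collocation problem of Definition~\ref{def:comb_colloc_problem}: a coarse-grid correction that realises an approximate block Gau\ss--Seidel sweep on the coarse level (Lemma~\ref{lem:it_multistep_solver}), followed by an approximate block Jacobi sweep on the fine level (Lemma~\ref{lem:it_bl_solver}). The whole interplay between coarse and fine is then of exactly the same form as the two-level MLSDC iteration of Lemma~\ref{lem:two_level_fas_corrected}, only applied to the composite collocation operator $\matr{M}_{[t_0,T]}$ rather than to a single-interval $\matr{M}$.

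First I would traverse the algorithmic description of Section~\ref{ch:pfasst_algorithm} step by step and identify each substep with an algebraic operation on the composite system. The coarse-level sweep (Step 1) processes the subintervals sequentially, receiving $\xvect{\tilde{\bar U}}_l^{k}$ from the preceding processor before sweeping; by Lemma~\ref{lem:it_multistep_solver} this is exactly one application of the composite preconditioner $\matr{\tilde P}_{[t_0,T]}^{-1}$ (with the $-\matr{\tilde N}$ subdiagonal). The fine-level sweep (Step 3) re-uses values from the \emph{previous} PFASST iteration at the interval boundaries, which by Lemma~\ref{lem:it_bl_solver} is exactly one application of the block-diagonal $\matr{\hat P}_{[t_0,T]}^{-1}$. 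The transfer substeps (restriction of the fine iterate, formation of the FAS correction, interpolation of the coarse correction) are block-wise by assumption on $\matr{T}_F^C,\matr{T}_C^F$, so they act on the composite vector the same way they act on each subinterval.

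The key algebraic point to verify, and the one I expect to be the main obstacle, is that the FAS right-hand side on the composite coarse problem collapses to the clean coarse-grid correction form appearing in the first line of~\eqref{eq:pfasst_in_matrix_form}. This relies on the commutation assumption $\matr{\tilde N}\matr{T}_F^C = \matr{T}_F^C \matr{N}$ together with block-wise commutation of $\matr{T}_F^C$ with $\matr{I}-\dt \matr{Q}\otimes\matr{A}$ on each subinterval; together these give $\matr{T}_F^C \matr{M}_{[t_0,T]} = \matr{\tilde M}_{[t_0,T]} \matr{T}_F^C$ and $\matr{T}_F^C \vect{c}_{[t_0,T]} = \vect{\tilde c}_{[t_0,T]}$. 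With these identities in hand, the computation inside Lemma~\ref{lem:two_level_fas_corrected} carries over verbatim to the composite setting: the FAS-corrected coarse sweep starting from $\matr{T}_F^C\vect{U}^k_{[t_0,T]}$ produces, after interpolation, precisely $\vect{U}^{k+1/2}_{[t_0,T]} = \vect{U}^k_{[t_0,T]} + \matr{T}_C^F\matr{\tilde P}_{[t_0,T]}^{-1}\matr{T}_F^C(\vect{c}_{[t_0,T]} - \matr{M}_{[t_0,T]}\vect{U}^k_{[t_0,T]})$, which is the first line of~\eqref{eq:pfasst_in_matrix_form}; the second line is then the block Jacobi step of Lemma~\ref{lem:it_bl_solver} applied to $\vect{U}^{k+1/2}_{[t_0,T]}$.

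Finally, to extract the iteration matrix~\eqref{eq:iteration_matrix_pfasst}, I would subtract the exact composite solution $\vect{U}^\star$ (which satisfies $\matr{M}_{[t_0,T]}\vect{U}^\star = \vect{c}_{[t_0,T]}$) from both lines of~\eqref{eq:pfasst_in_matrix_form}. Each half-step then has the standard error-propagation form $\matr{I} - \matr{P}^{-1}\matr{M}_{[t_0,T]}$ with $\matr{P}^{-1}$ equal to $\matr{T}_C^F\matr{\tilde P}_{[t_0,T]}^{-1}\matr{T}_F^C$ for the coarse part and $\matr{\hat P}_{[t_0,T]}^{-1}$ for the fine part. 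Composing the two half-step error propagators, in this order, yields~\eqref{eq:iteration_matrix_pfasst}. The only care required here is to ensure that the composition is taken in the correct order (coarse first, fine second), matching the order in which $\vect{U}^{k+1/2}_{[t_0,T]}$ is fed into the Jacobi half-step.
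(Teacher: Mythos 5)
Your proposal follows essentially the same route as the paper: expand the two half-steps of \eqref{eq:pfasst_in_matrix_form} into the FAS correction, the coarse sweep, the coarse-grid correction and the fine sweep; match these against the algorithmic description using Lemma~\ref{lem:it_multistep_solver} for the serial, Gau\ss--Seidel-like coarse level and Lemma~\ref{lem:it_bl_solver} for the parallel, Jacobi-like fine level; and observe that the only place where the composite formulation can deviate from the per-subinterval computations is the FAS term. One correction, though: you justify the collapse of the FAS right-hand side by claiming $\matr{T}_F^C \matr{M}_{[t_0,T]} = \matr{\tilde M}_{[t_0,T]}\matr{T}_F^C$, derived from a ``block-wise commutation of $\matr{T}_F^C$ with $\matr{I}-\dt\matr{Q}\otimes\matr{A}$''. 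That identity is not among the hypotheses, is generally false (with spatial-only coarsening it reduces to the intertwining condition $\matr{R}\matr{A}=\matr{\tilde A}\matr{R}$ for the spatial restriction $\matr{R}$, which injection or linear restriction between two independent discretizations of the operator does not satisfy), and, if it did hold, would make the FAS correction $\vect{\tau}^k = \matr{\tilde M}\matr{T}_F^C\vect{U}^k - \matr{T}_F^C\matr{M}\vect{U}^k$ vanish identically. It is also not needed: exactly as in the proof of Lemma~\ref{lem:two_level_fas_corrected}, the terms $\matr{\tilde M}_{[t_0,T]}\matr{T}_F^C\vect{U}^k$ cancel algebraically between $\vect{\tau}^k$ and $-\matr{\tilde M}_{[t_0,T]}\vect{\tilde U}^k$ once the coarse sweep is started from $\vect{\tilde U}^k = \matr{T}_F^C\vect{U}^k$, with no commutation required. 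What the composite setting genuinely adds---and what the paper isolates as the operator $\matr{L} = \matr{T}_F^C\matr{N} - \matr{\tilde N}\matr{T}_F^C$ in \eqref{eq:tau_correction_on_whole_domain}---is a mismatch on the subdiagonal (communication) blocks between the FAS correction the composite formula produces and the one the algorithm actually computes per subinterval; only there does the hypothesis $\matr{\tilde N}\matr{T}_F^C = \matr{T}_F^C\matr{N}$ enter. With that repaired, your identification of the two half-steps and the error-propagation argument for \eqref{eq:iteration_matrix_pfasst}, with the coarse factor rightmost and the fine factor leftmost, match the paper's proof.
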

\begin{proof}
We compare systematically each step of PFASST with the sub-computations found in equation \eqref{eq:pfasst_in_matrix_form}, which expands into
\begin{align}
  \vect{\tau}_{[t_0,T]}^k &=\matr{\tilde{M}}_{[t_0,T]} \matr{T}_F^C \vect{U}^k_{[t_0,T]} - \matr{T}_F^C \matr{M}_{[t_0,T]} \vect{U}^k_{[t_0,T]} \label{eq:expanded_pfasst_1} \\
  \vect{\tilde{U}}^{k+1}_{[t_0,T]} &= \vect{\tilde{U}}^{k}_{[t_0,T]}  + \matr{\tilde{P}}^{-1}\left( \vect{\tilde{c}}_{[ t_0,T]} + \vect{\tau}_{[ t_0,T]}^k  - \matr{\tilde{M}}_{[ t_0,T]} \vect{\tilde{U}}^{k}_{[ t_0,T]}\right)\label{eq:expanded_pfasst_2}\\
   \vect{U}^{k+\frac{1}{2}}_{[ t_0,T]} &= \vect{U}^{k}_{[ t_0,T]} + \matr{T}_C^F \left( \vect{\tilde{U}}^{k+1}_{[ t_0,T]} -\matr{T}_F^C \vect{U}^k_{[ t_0,T]} \right)\label{eq:expanded_pfasst_3}\\
   \vect{U}^{k+1}_{[ t_0,T]} &= \vect{U}^{k+\frac{1}{2}}_{[ t_0,T]} + \matr{\hat{P}}^{-1}\left( \vect{c}_{[ t_0,T]} - \matr{M}_{[ t_0,T]} \vect{U}^{k+\frac{1}{2}}_{[ t_0,T]}  \right). \label{eq:expanded_pfasst_4}
\end{align}
From top to bottom, we have the computation of the FAS correction $\vect{\tau}^k$, the SDC sweep on the coarse level, coarse-grid correction, and the SDC sweep on the fine level.
PFASST's communication between the subintervals has been already derived in Lemma \ref{lem:it_multistep_solver} and Lemma \ref{lem:it_bl_solver}. 
The evaluations of right-hand side in the form of $\vect{F}$ and $\vect{\tilde{F}}$ are included in the matrix vector multiplication with $\matr{M}_{[ t_0,T]}$ and $\tilde{\matr{M}}_{[ t_0,T]}$, respectively.

The computation of the FAS correction $\vect{\tau}^k_{[ t_0,T]}$ as in~\eqref{eq:expanded_pfasst_2} differs from the formula~\eqref{eq:fas_mlsdc}, which we derived for MLSDC, i.e.~which is formed for each subinterval.
The FAS correction vector of~\eqref{eq:expanded_pfasst_2}, has additional terms:
\begin{align}
  \matr{L} =\matr{T}_F^C \matr{N} - \matr{\tilde{N}} \matr{T}_F^C
\end{align} 
with
\begin{align}
     \tau_{[ t_0,T]} = \left(\tau_{\left[ t_0,t_1 \right]},\ \tau_{\left[ t_1,t_2 \right]} + \matr{L}\vect{U}^k_{\left[ t_0,t_1 \right]},\ ...,\ \tau_{\left[ t_{N-1},T \right]} + \matr{L}\vect{U}^k_{\left[ t_{N-1},T \right]} \right)^T
    \label{eq:tau_correction_on_whole_domain}
\end{align}
However, by requirement we have $\matr{L}=\matr{0}$ and in Remark~\ref{rm:requirement} we will investigate how this requirement is met.
The iteration matrix is the result of simple algebraic manipulations.
\end{proof}
In contrast to Lemma~\ref{lem:two_level_fas_corrected} for MLSDC, we now have an additional requirement.
\begin{remark}
  Let $t_{i,j}$ be the $j$-th entry of the $i$-th row of $\matr{T}_F^C$.
  Due to the assumptions above, $\matr{L}=\matr{0}$ translate to
\begin{align*}
\begin{pmatrix}
  t_{\tilde M, 1} & \cdots & t_{\tilde M, M-1} & t_{\tilde M, M} \\
  \vdots & & \vdots & \vdots\\
  t_{\tilde M, 1} & \cdots & t_{\tilde M, M-1} & t_{\tilde M, M} 
\end{pmatrix}
&= 
\begin{pmatrix}
0   & \cdots & 0 & \sum_{j=1}^M t_{1, j} \\
  \vdots &   & \vdots  & \vdots \\
0   & \cdots & 0 & \sum_{j=1}^M t_{\tilde M, j} 
\end{pmatrix}.
\end{align*}
Hence, we require that
\begin{align*}
  t_{\tilde M, j} = 0 \quad \forall \; j \in \left\{1,\ldots, M-1\right\} \qquad \mbox{and} \qquad
  t_{\tilde M, M} = \sum_{j=1}^M t_{i, j} \quad \forall \; i\in \left\{ 1,\ldots, \tilde M \right\}.
\end{align*}
If the restriction $\matr{T}_F^C$ of a constant vector yields a constant vector with the same values but a smaller dimension,
we infer that,
$$\sum_{j=1}^M t_{i, j} = 1\quad \forall \; i\in \left\{ 1,\tilde M \right\}$$
and hence $t_{\tilde M, M}=1$.
This requirement is met, when the restriction just projects the last node of the fine level onto the last node of the coarse level. 
It holds e.g. for the simple linear restriction or just injection, as long as the quadrature nodes $\tilde \tau_{\tilde M}$ and $\tau_M$ overlap for each subinterval.
\label{rm:requirement}
\end{remark}

The hierarchy of discretization on which PFASST is working and the exchange of information between those levels using FAS and coarse-grid correction obviously indicates a strong similarity to classical multigrid methods.
This relation is in particular emphasized by the iteration matrix. 
Standard multigrid methods are typically described and analyzed by their iteration matrix $\matr{T}_{\mathrm{MG}}$, which reads
\begin{align}
  \matr{T}_{\mathrm{MG}}(\nu,\mu) = 
  \left( \matr{I} - \matr{P}^{-1}_{\mathrm{post}} \matr{M} \right)^{\nu}  
  \left( \matr{I} - \matr{T}_C^F \matr{\tilde{M}}^{-1}\matr{T}_F^C \matr{M}\right)
  \left( \matr{I} - \matr{P}^{-1}_{\mathrm{pre}} \matr{M} \right)^{\mu}, 
  \label{eq:it_matrix_mg}
\end{align}
for $\nu$ post- and $\mu$ pre-smoothing steps. 
The expression in the middle is the coarse grid correction. 
In a standard two-grid algorithm, the exact solution $\matr{\tilde{M}}^{-1}$ is used at the coarse level.
In practice it is also legitimate to use the approximate solution in form of $\matr{\tilde{P}}^{-1}$.
PFASST does exactly this. 
Under the conditions of Theorem~\ref{th:pfasst_in_matrix_form}, the comparison of \eqref{eq:it_matrix_mg} and \eqref{eq:iteration_matrix_pfasst} yields that PFASST can be readily interpreted as multigrid algorithm with one post-smoothing iteration and no pre-smoothing steps.
We point out that this does not prove that PFASST actually behaves like a multigrid method in terms of convergence and robustness. 
In particular, properties like smoothing and approximation property are not necessarily satisfied and the analysis of the algorithm in this respect is left for future work.
However, this does not prohibit an analysis based on the tools which are usually used for multigrid schemes.

\section{Local Fourier analysis for PFASST}
\label{ch:lfa_and_transformation_matrices}
The most common tool for analysis and design of multigrid algorithms is the Local Fourier Analysis (LFA, see e.g.~\cite{trottenberg2000multigrid}).
It simplifies the problem by making assumptions like periodic domains and constant coefficients. 
The goal of LFA is, in the rigorous case, the computation and usually the estimation of the 
spectral radius of the iteration matrix and its building blocks.  

In this work we focus on two prototype problems, namely the diffusion and advection problem in one dimension,
to show how PFASST can be analyzed in principle. 
We will use periodicity in space to stay rigorous in that dimension.

The usual approach to LFA is to define and work with Fourier symbols for each operator. 
These Fourier symbols represent the behavior of the operators on the grid functions 
\begin{align}
  \varphi_{\theta}(x) = \exp\left( i \theta x / h\right),\quad x \in \left[ 0,1 \right],\ \theta \in \left[ -\pi, \pi \right),
  \label{eq:grid_functions}
\end{align}
for distinct frequencies $\theta$. 
The observation, how the different grid functions are damped or changed on different grids and under different operations is a central point of LFA.

However, in our analysis we will make use of the matrix notation and henceforth avoid the use of explicit Fourier symbols, 
but rather perform a block diagonalization of the matrices of PFASST.
The goal is the block-wise diagonalization of the iteration matrix of PFASST. 
Later on, each block will be associated with a discrete frequency. 
Therefore, we will be able to state which frequency is damped or changed to which extend. 

Due to the periodicity in space, parts of the iteration matrix consists of circulant matrices. 
A circulant matrix is a special kind of Toeplitz matrix where each row vector is rotated one element to the right relative to the preceding row vector
and denoted as
\begin{align}
   \matr{C} = \begin{pmatrix} c_{0} & c_1 & \cdots & c_{\frac{N}{2}-1} \\
    c_{\frac{N}{2}-1} & c_{0} &  & \\
                       & \ddots & \ddots &  \\
		       c_1 & \cdots & c_{\frac{N}{2}-1} & c_{0}
		     \end{pmatrix}.
  \label{eq:circulant_matrix}
\end{align}
It has the eigenvalues $\lambda_k$ and eigenvectors $\psi_k$  for $k = {0,\ldots,N-1}$
\begin{align}
  \begin{split}
    \lambda_k &= \sum_{j=0}^{N-1} c_j \exp\left(i\frac{2\pi}{N}k \cdot j\right) \quad \mbox{and} \\ 
   \psi_k &= \frac{1}{\sqrt{N}} \left[\exp\left(i\frac{2\pi}{N}k\cdot 0\right),\exp\left(i\frac{2\pi}{N}k\cdot 1\right), \ldots, \exp\left(i\frac{2\pi}{N}k\cdot (N-1)\right)\right]^T . 
  \end{split}
   \label{eq:eig_vals_circ_matr}
\end{align}
This also means that with the transformation matrix $\matr{\Psi}$, which is orthogonal and consists of the eigenvectors, it holds
\begin{align}
  \left(\matr{\Psi}^T \matr{C} \matr{\Psi} \right)_{j,j} = \lambda_j.
  \label{eq:spectral_transform}
\end{align}

For two diagonalizable matrices $\matr{A},\matr{B}$ with the same eigenvector space it holds:
\begin{align}
  \begin{split}
   \matr{\Psi}^T \left( \matr{A}+\matr{B}\right) \matr{\Psi} &=  \matr{\Psi}^T \matr{A} \matr{\Psi} + \matr{\Psi}^T \matr{B} \matr{\Psi} = \matr{D}^{(A)} + \matr{D}^{(B)},   \\
  \matr{\Psi}^T \matr{A}\matr{B}\matr{\Psi} &= \matr{\Psi}^T \matr{A} \matr{\Psi}\matr{\Psi}^T \matr{B}\matr{\Psi} = \matr{D}^{(A)} \matr{D}^{(B)},\\
  \matr{\Psi}^T \matr{A}^{-1}\matr{\Psi} &= \left(\matr{D}^{(A)}\right)^{-1} 
  \end{split}
  \label{eq:transformation_rules}
\end{align}
Furthermore, for the Kronecker product we have $\tmatr{P}^{-1} \matr{A} \otimes \matr{B} \tmatr{P} = \matr{B} \otimes \matr{A}$, where $\tmatr{P}$ is a suitable permutation matrix.
Those rules will be used extensively by the following algebraic manipulations.

\subsection{Transforming the PFASST iteration matrix}
\label{ch:transforming_pfasst}

The PFASST algorithm has 3 layers it works on. 
The first layer is the spatial space, the second consists of the quadrature nodes, and the third is the temporal structure given by the subintervals.
All layers are interweaved: we illustrate this by rewriting the system matrix $\matr{M}_{[ t_0,T]}$ under the assumption that we have the same problem (i.e.~the same discretization of the same operator) on each subinterval
\begin{align}
  \matr{M}_{[ t_0,T]} &= \matr{I}_L \otimes \matr{I}_M \otimes \matr{I}_N - \matr{I}_L \otimes \matr{Q} \otimes \matr{A} - \matr{E} \otimes \matr{N} \otimes \matr{I}_N,
  \label{eq:M_3_layers}
\end{align}
where $N$ is again the number of degrees of freedom in the spatial dimension, $M$ the number of nodes per subinterval, and $L$ the number of subintervals. 
Also, a new operator $\matr{E} \in \R^{L\times L}$ is introduced, which has ones on the first subdiagonal and zeros elsewhere.
In each term the layers are separated by the Kronecker product, and through the summation of those parts we interweave them again. 
Our transformation aims at the layer,
where each matrix is diagonalizable by $\matr{\Psi}$.

We define a transformation matrix $\tmatr{F}$, which effects all layers, as
\begin{align*}
  \tmatr{F}= \tmatr{P} \cdot\left( \matr{I}_L \otimes \matr{I}_N \otimes \matr{\Psi}\right),\quad \tmatr{F}^{-1} = \left(\matr{I}_L \otimes \matr{I}_N \otimes \matr{\Psi}^{T} \right)\cdot \tmatr{P}^{-1}, 
\end{align*}
and therefore
\begin{align*}
  \tmatr{F}^{-1} \matr{M}_{[ t_0,T]} \tmatr{F}= \matr{I}_N \otimes \left( \matr{I}_{L}\otimes \matr{I}_{M} - \matr{E} \otimes \matr{N} \right)  - \matr{D}^{(A)}\otimes \matr{I}_L \otimes \matr{Q}.
\end{align*}

This yields diagonal matrices on the layer for the spatial dimension, so that we can write:

\begin{align*}
  \tmatr{F}^{-1} \matr{M}_{[ t_0,T]} \tmatr{F} &= \mathrm{diag}\left(\matr{B}^{(M_{[ t_0,T]})}_1, ..., \matr{B}_N^{(M_{[ t_0,T]})}\right)\\
  \quad \mbox{with} \quad \matr{B}^{(M_{[ t_0,T]})}_j &= \matr{I}_{L}\otimes \matr{I}_{M} - \matr{E} \otimes \matr{N} - \lambda_j \matr{I}_L \otimes \matr{Q}.
\end{align*}
We call the resulting blocks ``time collocation blocks'', highlighting the dimension and components of the blocks.
This idea was recently introduced in~\cite{friedhoff2015generalized} in a different notation and is named ``semi-algebraic mode analysis'' (SAMA). 
The motivation behind SAMA is the large gap between the theoretical analysis and the actual performance of multigrid methods for parabolic equations and tine-parallel methods.
In~\cite{friedhoff2015generalized} Friedhoff et al.~demonstrated that SAMA
enables accurate predictions of the short-term behavior and asymptotic convergence factors. 

The transformation strategy above leads to a block structure for all matrices 
which emerge in the formulation of PFASST, in particular for the iteration matrix.
Here, the interpolation and restriction matrices need special attention, though.

\subsubsection{Transforming Interpolation and Restriction}

In this section we focus on interpolation and restriction operators, 
which are designed for two special isometric periodic grids with an even number of fine grid points.
Between these two grids we define a special class of interpolation and restriction pairs. 

\begin{definition}

  Let $\matr{C}\in \R^{N\times N}$ be a circulant matrix, 
  with the associated eigenvalues $\left\{ \lambda_k \right\}_{k=1\ldots \frac{N}{2}}$,
  and let the fine grid $X$ and coarse grid $ \tilde{X}$ be defined as  
\begin{align*}
  X = [x_1,\ldots, x_N] \; \mbox{ and } \; \tilde{X} = [\tilde{x}_1, \ldots, \tilde{x}_{N/2}] \mbox{ ,with }\; x_{2j-1} = \tilde{x}_{j} \; \mbox{for all }\; j\in \left\{ 1,\ldots, \frac{N}{2} \right\}.
\end{align*}
Let $\tmatr{W}(.,.):\R^{N\times N}\times\R^{N\times N} \mapsto \R^{2N \times N}$ be an ``interweaving'' operator, 
which stacks together the rows of two matrices subsequently, 
beginning with the first row of the first matrix, followed by the first row of the second matrix and finally ending with the last row of the second matrix.
Then we define the class of circulant interweaved interpolation (``CI-interpolation'') operators as
\begin{align}
  \Pi = \left\{ \matr{T}_C^F : \exists \matr{C} \in \R^{N\times N} \mbox{ circulant and  } \matr{C}\cdot \vect{1}=\vect{1},  \matr{T}_C^F = \tmatr{W}(\matr{I}_N, \matr{C})  \right\}
  \label{eq:harmless_interpolation}
\end{align}
and the class of circulant interweaved restriction (``CI-restriction'') operators as
\begin{align}
  \Pi^T = \left\{ \matr{T}_F^C : c \left( \matr{T}_F^C \right)^T \in \Pi,\ c\in\mathbb{R} \right\}.
  \label{eq:harmless_restriction}
\end{align}
\end{definition}
Due to the circulant nature of the interweaved matrices, we are able to state a transformation analytically.
\begin{lemma}
  Let $\matr{T}_C^F$ be a  CI-interpolation and $\matr{T}_F^C$ the associated CI-restriction operator, $\matr{\Psi}$ the transformation matrix for $N$ grid points and
  $\matr{\Psi}_C$ the transformation matrix for $N/2$ grid points. 
Then it holds
  \begin{align}
    \matr{\Psi}^T \matr{T}_C^F \matr{\Psi}_C &=
  \begin{pmatrix}
	d_0  & & \\
	     & \ddots & \\
	     &       & d_{N/2-1} \\
 \hat{d}_0   & &   \\
	    &  \ddots & \\
	    &  & \hat{d}_{N/2-1} 
	\end{pmatrix}
	\label{eq:transformed_intpl}
	\end{align}
and
    \begin{align}
\matr{\Psi}^T_C \matr{T}_F^C \matr{\Psi}
&= \frac{1}{2}
\begin{pmatrix}
  d_0  & &  &\hat{d}_0   & & \\
    &  \ddots & &    &  \ddots &  \\
    &        & d_{N/2-1}     & & & \hat{d}_{N/2-1}  \\
\end{pmatrix}.
    \label{eq:transformed_rstr}
  \end{align}
  The values on the diagonal depend solely on the circulant matrix $\matr{C}$ and its eigenvalues $\lambda^{(C)}_k$ for $k \in \left\{ 0,N/2-1 \right\}$. More precisely, we have
\begin{align}
  d_{k} = \frac{1+\lambda^{(C)}_k \exp(-i\frac{2\pi}{N}k)}{\sqrt{2} } \quad \mbox{and} \quad \hat{d}_{k} = \frac{1-\lambda^{(C)}_k \exp(-i\frac{2 \pi}{N}k)}{\sqrt{2} }.
  \label{eq:diagonal_entries_t_intpl}
\end{align}
\label{lem:transformation_intpl_rstr}
\end{lemma}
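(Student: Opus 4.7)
The plan is to establish both identities by direct column-by-column computation, exploiting the fact that every coarse eigenvector $\psi_j^{(C)}$ of $\matr{\Psi}_C$ is simultaneously an eigenvector of the circulant $\matr{C}$ with eigenvalue $\lambda_j^{(C)}$. All the structure on the right-hand sides of (33)--(34) then reduces to a single geometric-sum identity that encodes the two-to-one aliasing between fine and coarse Fourier modes.

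First I would treat the interpolation identity. By the definition of $\tmatr{W}$, multiplying $\matr{T}_C^F = \tmatr{W}(\matr{I}, \matr{C})$ against the coarse eigenvector $\psi_j^{(C)}$ produces a fine-grid vector whose odd-indexed entries reproduce $\psi_j^{(C)}$ itself and whose even-indexed entries equal $\lambda_j^{(C)}$ times those same values. Forming $\langle\psi_k,\matr{T}_C^F\psi_j^{(C)}\rangle$ and splitting the resulting sum of $N$ terms into its even and odd sub-sums, I would find that each sub-sum collapses to the geometric sum
\begin{align*}
\sum_{m=0}^{N/2-1}\exp\!\bigl(i(\tilde\theta_j-2\theta_k)m\bigr),
\end{align*}
which equals $N/2$ precisely when $k\equiv j\pmod{N/2}$ and vanishes otherwise. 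The two surviving choices $k=j$ and $k=j+N/2$ differ only through the factor $\exp(-i\theta_k)$; since $\theta_{j+N/2}=\theta_j+\pi$ changes the sign of this factor, the first case produces $d_j=(1+\lambda_j^{(C)}e^{-i\theta_j})/\sqrt{2}$ while the second produces $\hat d_j=(1-\lambda_j^{(C)}e^{-i\theta_j})/\sqrt{2}$, with the $\sqrt{2}$ stemming from the normalization $N/2\cdot(N\cdot N/2)^{-1/2}$. This is exactly the stacked two-block pattern in (33).

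For the restriction identity, I would exploit the defining relation $c(\matr{T}_F^C)^T\in\Pi$: the restriction is, up to the scalar $c$, the (conjugate) transpose of an interpolation operator of the same type. Applying the interpolation identity already established to $c(\matr{T}_F^C)^T$ and then transposing yields immediately the horizontal two-block layout of (34), with the overall factor $1/2$ fixing $c=2$, which is the natural full-weighting-style normalization so that the restriction of the constant fine vector is again the constant coarse vector.

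The principal obstacle is book-keeping around the aliasing of fine into coarse frequencies: the two fine indices $k=j$ and $k=j+N/2$ both project to the single coarse index $j$, and one has to be careful to attribute the correct sign to $\lambda_j^{(C)}e^{-i\theta_j}$ in each case in order to recover the $d_j/\hat d_j$ splitting rather than two copies of the same quantity. A secondary subtlety is that $\matr{\Psi}$ is complex, so that the paper's use of $\matr{\Psi}^T$ must be read as the conjugate transpose (as is needed for the spectral relation (28) itself to hold literally); once this convention is fixed the transpose step used to pass from interpolation to restriction proceeds cleanly. Everything else is a finite Fourier computation.
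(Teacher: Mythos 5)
Your proposal is correct and follows essentially the same route as the paper: both compute the columns of $\matr{T}_C^F\matr{\Psi}_C$ by applying the interweaving structure and the circulant eigenrelation to the coarse eigenvectors, and then identify the coefficients with respect to the two aliased fine Fourier modes (your explicit geometric-sum orthogonality argument and the remark that $\matr{\Psi}^T$ must be read as a conjugate transpose make the "comparison" step of the paper slightly more airtight, and you also spell out the restriction case via the transpose relation, which the paper leaves implicit). No substantive differences.
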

\begin{proof}
 Using the properties of the interweaving operator we have
 \begin{align*}
 \matr{T}_C^F\cdot\matr{\Psi}_C = \tmatr{W}(\matr{I}_{\frac{N}{2}}, \matr{C}_{\frac{N}{2}})\matr{\Psi}_C = \tmatr{W}(\matr{\Psi}_C , \matr{C}_{\frac{N}{2}}\matr{\Psi}_C ).
 \end{align*}
 Using the eigenvector eigenvalue relation~\eqref{eq:eig_vals_circ_matr} of the two circulant matrices $\matr{C}$ and $\matr{I}$, see Section~\ref{ch:lfa_and_transformation_matrices}, 
 for the computation of
 \begin{align*}
   \left[ \matr{T}_C^F\cdot\matr{\Psi}_C \right]_{-,k}  &= 
   \sqrt{\frac{2}{N}}
 \begin{pmatrix}\exp(i 4 \pi/N k\cdot 0)  \\  \lambda_k \exp(i 4 \pi/N k\cdot 0) \\ \vdots \\ \exp(i 4 \pi/N k\cdot (N/2-1))  \\  \lambda_k \exp(i 4 \pi/N k\cdot (N/2-1))\end{pmatrix}.
 \end{align*}
 A comparison to the immediate meaning of~\eqref{eq:transformed_intpl} demands

  \begin{align*}
   \left[ \matr{T}_C^F\cdot\matr{\Psi}_C \right]_{-,k}  
 & \mathop{=}\limits^! \frac{d_k}{\sqrt{N}} \begin{pmatrix} \exp(i 2\pi/Nk\cdot0)\\ \vdots \\ \exp(i 2\pi/Nk\cdot(N-1)) \end{pmatrix} +\frac{\hat{d}_{k}}{\sqrt{N}} \begin{pmatrix} \exp(i 2\pi/N(N/2+k)\cdot0) \\ \vdots \\ \exp(i 2\pi/N (N/2+k) \cdot(N-1)) \end{pmatrix}.
 \end{align*}
Solving this system yields \eqref{eq:diagonal_entries_t_intpl}.
\end{proof}

Depending on the structure of $\matr{C}$, we are able to state further simplifications for
$d_k$ and $\hat d_k$, as we see in the following remark.

\begin{remark}
 For the special cases where $\matr{C}$ has a symmetric stencil with 
 \begin{align*}
   c_l = 
   \begin{cases}
     c_{\frac{N}{2} - l},  &\mbox{stencil length odd and } l \in {1,\ldots,m} \\
     c_{\frac{N}{2} - (l+1)}, &\mbox{stencil length even and } l \in {0,\ldots,m-1} \\
     0, &l > m \\
   \end{cases},
 \end{align*}
where $m \leq N/4$ for the even and $m \leq N/4 - 1/2$ for the odd case. 
Then, it holds for the odd case
$ d_k = d_{N/2-k} $ and $ \hat d_k = \hat d_{N/2-k} $.
In addition, for a CI-interpolation and -restriction operator with $\matr{C}\cdot \vect{1}=\vect{1}$ we have that $\lambda^{(C)}_0 = 1$ and hence $d_0 = 0$ and $\hat d_0 = \sqrt{2}$.
\label{rem:symmetric_stencils}
\end{remark}

We now use Lemma \ref{lem:transformation_intpl_rstr} to transform the coarse-grid correction.
For the interpolation operator, we obtain diagonal entries $\left\{ d_0,\hat d_0, \ldots, d_{N/2-1}, \hat d_{N/2-1}  \right\}$ and for the restriction operator the diagonal entries $\left\{ f_0,\hat f_0, \ldots, f_{N/2-1}, \hat f_{N/2-1}  \right\}$.
These entries may coincide if the same circulant matrix $\matr{C}$ is used for the construction of both operators.
Furthermore, we transform the inverse of the system matrix $\matr{\tilde{A}}^{-1}$
in the spatial dimension into a diagonal matrix consisting of the eigenvalues $\left\{ \tilde \lambda_0, \ldots, \tilde \lambda_{N/2 - 1} \right\}$ of $\matr{\tilde{A}}^{-1}$. 
Then we obtain
\begin{multline*} 
   \matr{\Psi}^T \matr{T}_C^F \matr{\tilde{A}}^{-1} \matr{T}_F^C \matr{\Psi} 
   = \matr{\Psi}^T \matr{T}_C^F \matr{\Psi}_C\matr{\Psi}_C^T  \matr{\tilde{A}}^{-1} \matr{\Psi}_C\matr{\Psi}_C^T\matr{T}_F^C \matr{\Psi} \\
   \begin{aligned}
  &=  \frac{1}{2}
  \begin{pmatrix}
	d_0  & & \\
	     & \ddots & \\
	     &       & d_{\frac{N}{2}-1} \\
 \hat{d}_0   & &   \\
	    &  \ddots & \\
	    &  & \hat{d}_{\frac{N}{2}-1} 
	\end{pmatrix}
	\begin{pmatrix}
	  \tilde{\lambda}_0 & & \\
	  & \ddots & \\
	  & &  \tilde{\lambda}_{\frac{N}{2}-1} 
	\end{pmatrix}
\begin{pmatrix}
  f_0  & &  &\hat{f}_0   & & \\
    &  \ddots & &    &  \ddots &  \\
    &        & f_{\frac{N}{2}-1}     & & & \hat{f}_{\frac{N}{2}-1}  \\
\end{pmatrix} \\
 &= 
 \frac{1}{2}
 \begin{pmatrix}
   d_1 \tilde{\lambda}_0 f_0 & & & \hat{d}_0 \tilde{\lambda}_0 f_0 & & \\
	& \ddots & &                 & \ddots & \\
	& & d_{\frac{N}{2}-1} \tilde{\lambda}_{\frac{N}{2}-1} f_{\frac{N}{2}-1} & & &      & \tilde{d}_{\frac{N}{2}-1} \tilde{\lambda}_{\frac{N}{2}-1} f_{\frac{N}{2}-1} \\
	d_0 \tilde{\lambda}_0 \hat{f}_0 & & & 	\hat{d}_0 \tilde{\lambda}_0 \hat{f}_0  & & \\
	& \ddots & &                 & \ddots & \\
	& & d_{\frac{N}{2}-1} \tilde{\lambda}_{\frac{N}{2}-1}\hat{f}_{\frac{N}{2}-1}  & & &      & \hat{d}_{\frac{N}{2}-1} \tilde{\lambda}_{\frac{N}{2}-1} \hat{f}_{\frac{N}{2}-1} 
 \end{pmatrix}.
 \end{aligned}
\end{multline*}
The values are now scattered over $3$ diagonals.
By using the appropriate permutation matrix we can gather them to new blocks:
\begin{align}
  \tmatr{P}^{-1}\matr{\Psi}^T \matr{T}_C^F \matr{\tilde{A}}^{-1} \matr{T}_F^C \matr{\Psi} \tmatr{P} &=  \mbox{diag}\left( \matr{B}_0, \ldots, \matr{B}_{\frac{N}{2}-1} \right),\\
  \mbox{where}\quad  \matr{B}_l &= 
  \begin{pmatrix}
    d_l \tilde{\lambda}_l f_l & \hat{d}_l \tilde{\lambda}_l f_l  \\
    d_l \tilde{\lambda}_l \hat{f}_l & \hat{d}_l \tilde{\lambda}_l \hat{f}_l \\
  \end{pmatrix} \in \R^{2\times 2}.
 \label{eq:permutate_cg_2}
\end{align}
In this structure we find the classical mode-mixing property of interpolation and restriction operators. 
This well-known property of standard multigrid iterations interweaves pairs of one low and one high frequency, the``harmonics''. 

\subsubsection{Transforming the full iteration matrix}

The iteration matrix of PFASST can now be transformed into a block matrix with $N/2$ blocks of the size $M\cdot L$. 
Each block is associated with a harmonic of the spatial problem and therefore with one high and one low frequency. 
In contrast, the smoother alone is decomposed into $N$ blocks, 
which may be associated with only one single frequency. 
This is summarised in the following theorem. 

\begin{theorem}
  Let us have a iteration matrix in the form of \eqref{eq:iteration_matrix_pfasst} with
  \begin{align*}
    \matr{T} &= \left( \matr{I} - \matr{P}^{-1}\matr{M} \right)\left( \matr{I} - \matr{T}_C^F \matr{\tilde{P}}^{-1} \matr{T}_C^F \matr{M}  \right),
  \end{align*}
  where $\matr{M}$ is the collocation matrix, $\matr{T}_F^C,\matr{T}_C^F$ are two circulant interweaved transfer operators and $\matr{P},\matr{\tilde{P}}$ are two preconditioner with 
  a matrix in the spatial layer, which is diagonalisable and has the same eigenvector space as the spatial system matrix $\matr{A}$. 
  Then there exists a transformation $\tmatr{F}$ so that
  \begin{align}
    \tmatr{F}^{-1} \matr{T} \tmatr{F} &=  \mathrm{diag}\left( \tmatr{B}^{(S)}_0 \tmatr{B}^{(CGC)}_0, \ldots, \tmatr{B}^{(S)}_{\frac{N}{2}-1}\tmatr{B}^{(CGC)}_{\frac{N}{2}-1}
  \right) \in \R^{LMN\times LMN}\quad ,\mbox{with} \label{eq:transformed_iteration_matrix_1}
\\
    \tmatr{B}^{(S)}_k &= 
     \begin{pmatrix}
       \matr{I} - \left(\matr{B}_k^{(P)}\right)^{-1} \matr{B}^{(M)}_k & \\
       & \matr{I} - \left(\matr{B}_{\frac{N}{2} + k}^{(P)}\right)^{-1} \matr{B}^{(M)}_{\frac{N}{2} + k} 
     \end{pmatrix} \in \R^{2LM \times 2LM} \label{eq:transformed_iteration_matrix_2}
 \\
\tmatr{B}^{(CGC)}_k &=
     \begin{pmatrix} 
       \matr{I} - f_k d_k \left(\matr{B}^{(\tilde{P})}_k\right)^{-1} \matr{B}^{(M)}_k & -\hat{f}_k d_k \left(\matr{B}^{(\tilde{P})}_k \right)^{-1}\matr{B}^{(M)}_{\frac{N}{2}+k} \\
       -\hat{d}_k f_k \left(\matr{B}^{(\tilde{P})}_k \right)^{-1} \matr{B}^{(M)}_{k} & \matr{I} - \hat{f}_k \hat{d}_k \left(\matr{B}^{(\tilde{P})}_k \right)^{-1} \matr{B}^{(M)}_{\frac{N}{2}+k}
     \end{pmatrix} \in \R^{2LM \times 2LM} ,
    \label{eq:transformed_iteration_matrix_3}
  \end{align}
  with matrices $B_k^{(P)},B_k^{(M)} \in \R^{LM\times LM}$ for $k=0\ldots N-1$ and  $B_k^{(\tilde P)} \in \R^{LM\times LM}$  for $k=0\ldots \frac{N}{2} - 1$, 
  solely depending on the eigenvalues of $\matr{A}$ and $\matr{\tilde A}$. Where
  \begin{align}
    \begin{split}
      \matr{\Psi}^T \matr{P} \matr{\Psi} &= \mathrm{diag}\left( \matr{B}^{(P)}_0, \ldots , \matr{B}^{(P)}_{N-1} \right), \mbox{with }\;
      \matr{B}^{(P)}_j = \matr{I}_{L}\otimes \matr{I}_{M} - \matr{E} \otimes \matr{N} - \lambda_j^{(A)} \matr{I}_L \otimes \matr{Q_{\Delta}},\\
    \matr{\Psi}^T \matr{M} \matr{\Psi} &= \mathrm{diag}\left( \matr{B}^{(M)}_0, \ldots , \matr{B}^{(M)}_{N-1} \right), \mbox{with }\;
    \matr{B}^{(M)}_j = \matr{I}_{L}\otimes \matr{I}_{M} - \matr{E} \otimes \matr{N} - \lambda_j^{(A)} \matr{I}_L \otimes \matr{Q},\\
    \matr{\Psi}^T \matr{\tilde{P}} \matr{\Psi} &= \mathrm{diag}\left( \matr{B}^{(\tilde{P})}_0, \ldots , \matr{B}^{(\tilde{P})}_{\frac{N}{2}-1} \right), \mbox{with }\;
    \matr{B}^{(\tilde{P})}_j= \matr{I}_{L}\otimes \matr{I}_{M} - \matr{E} \otimes \matr{N} - \lambda_j^{(\tilde{A})} \matr{I}_L \otimes \matr{Q_{\Delta}}.
    \label{eq:basic_blocks_sama}
    \end{split}
  \end{align}
  We call $\matr{B}^{(M)}_j, \matr{B}^{(P)}_j$ and $\matr{B}^{(\tilde{P})}_j$ basic blocks.
  The matrix $\matr{Q}_{\Delta} \in \R^{M \times M}$ is a lower triangular matrix approximating $\matr{Q}$, see Section \ref{ch:sdc}.
\label{th:transformed_iteration_matrix}
\end{theorem}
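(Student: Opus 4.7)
The plan is to exploit the Kronecker structure of $\matr{M}, \matr{P}, \matr{\tilde P}$ together with the mode-mixing analysis of the transfer operators from Lemma~\ref{lem:transformation_intpl_rstr}, then augment the spatial-Fourier transformation $\tmatr{F}$ by a further permutation that groups each spatial frequency $k$ with its harmonic $N/2+k$. The first step is to apply the transformation already derived in Section~\ref{ch:transforming_pfasst} to each of $\matr{M}, \matr{P}, \matr{\tilde P}$ separately. Since all three matrices share the same Kronecker pattern $\matr{I}_L \otimes \matr{I}_M \otimes \matr{I}_N - \matr{E}\otimes\matr{N}\otimes\matr{I}_N - \matr{I}_L\otimes(\cdot)\otimes(\matr{A}\text{ or }\matr{\tilde A})$ and since $\matr{A}, \matr{\tilde A}$ are assumed diagonalisable in a common eigenvector basis with transformation $\matr{\Psi}$, the same $\tmatr{F}$ block-diagonalises all three into the basic blocks $\matr{B}^{(M)}_j,\matr{B}^{(P)}_j,\matr{B}^{(\tilde P)}_j$ as in~\eqref{eq:basic_blocks_sama}. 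Using the algebraic rules~\eqref{eq:transformation_rules}, the conjugation of products and inverses distributes, so the smoother factor $\matr{I}-\matr{P}^{-1}\matr{M}$ is transformed directly into block diagonal form with $N$ blocks $\matr{I} - (\matr{B}^{(P)}_j)^{-1}\matr{B}^{(M)}_j$ of size $LM\times LM$.

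Next, I handle the coarse grid correction $\matr{I} - \matr{T}_C^F\matr{\tilde P}^{-1}\matr{T}_F^C\matr{M}$. The only obstacle to a pure frequency-wise decomposition is that the CI-interpolation and CI-restriction couple the harmonics $k$ and $N/2+k$, exactly as made precise by Lemma~\ref{lem:transformation_intpl_rstr}. Transforming the spatial factor alone yields entries $d_k, \hat d_k, f_k, \hat f_k$ scattered over three diagonals, precisely as in the block that precedes equation~\eqref{eq:permutate_cg_2}. Lifting this analysis to the full space--time setting (spatial Kronecker factors appear only on the spatial layer, leaving $\matr{Q}, \matr{Q}_\Delta, \matr{N}, \matr{E}$ untouched), the CGC in the Fourier basis assumes a structure that is block-diagonal after identifying the pairs $(k,N/2+k)$, with each pair giving a $2\times 2$ coupling mediated by $\{d_k,\hat d_k,f_k,\hat f_k\}$ and the basic blocks $\matr{B}^{(M)}_k,\matr{B}^{(M)}_{N/2+k},\matr{B}^{(\tilde P)}_k$.

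To match both factors simultaneously, I define the refined transformation $\tmatr{F}$ of the theorem as the composition of the transformation of Section~\ref{ch:transforming_pfasst} with a permutation $\tmatr{P}_{\mathrm{harm}}$ that reorders the $N$ single-frequency blocks into $N/2$ pairs $(k,N/2+k)$. Under this regrouping, the smoother factor—being diagonal in the individual frequencies—becomes block diagonal with the $2\times 2$ block structure of~\eqref{eq:transformed_iteration_matrix_2}, since the blocks for $k$ and $N/2+k$ simply sit on the diagonal without coupling. The CGC factor, by the computation at~\eqref{eq:permutate_cg_2} extended to the full space--time operator, takes the fully coupled form~\eqref{eq:transformed_iteration_matrix_3}. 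Multiplying these two block-diagonal-in-harmonics matrices block by block yields the product decomposition~\eqref{eq:transformed_iteration_matrix_1}.

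The main obstacle I expect is bookkeeping: ensuring that the same permutation $\tmatr{F}$ simultaneously resolves the smoother (whose natural decomposition is over single frequencies) and the CGC (whose natural decomposition is over harmonic pairs). The key insight that makes this work is that once frequencies are paired, the smoother is merely a \emph{block-diagonal} $2\times 2$ object (no coupling between paired modes), so both factors live in the same block-diagonal algebra indexed by $k\in\{0,\ldots,N/2-1\}$ and the product respects this decomposition. A secondary technical point is verifying that the assumption $\matr{\tilde N}\matr{T}_F^C = \matr{T}_F^C \matr{N}$ of Theorem~\ref{th:pfasst_in_matrix_form} is compatible with the block structure, so that the restriction/interpolation in the time-collocation layer acts as the identity and does not disturb the derivation on the subinterval/node layers; this guarantees that the only coupling introduced by the transfer operators is the spatial harmonic coupling handled above.
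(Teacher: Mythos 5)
Your proposal is correct and follows essentially the same route as the paper: spatial Fourier diagonalisation of $\matr{M}$, $\matr{P}$, $\matr{\tilde P}$ into the basic blocks, the mode-mixing computation of Lemma~\ref{lem:transformation_intpl_rstr} (as in~\eqref{eq:permutate_cg_2}) for the coarse-grid correction, and a final permutation pairing the harmonics $(k, N/2+k)$ so that the smoother blocks can be stacked to match the CGC blocks. The paper's own proof is just a terse sketch of these same steps, so your write-up is in effect a more explicit version of it.
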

\begin{proof}
The proof consists of straightforward computations.
The matrices $\matr{P},\matr{M}$ and $\matr{\tilde{P}}$ have 3 layers, 
separated by Kronecker products like in \eqref{eq:M_3_layers}. 
Applying the transformation in the spatial dimension leads to the basic blocks \eqref{eq:basic_blocks_sama}.
Similar to \eqref{eq:permutate_cg_2}, we choose the adequate permutation matrices on the 
layers of subintervals and quadrature nodes, to get the blocks of harmonics.
Also, each block of the post smoother is associated with a mode, hence we stack harmonic pairs together to $\tmatr{B}^{(S)}_k$ in order to match them with the blocks of the coarse grid correction $\tmatr{B}^{(CGC)}_k$,
performed by the same permutation matrix.
\end{proof}

This theorem makes it possible, at least semi algebraically, to analyze
the convergence properties of PFASST by computing the spectral radius of each block $\tmatr{B}^{(S)}_k\tmatr{B}^{(CGC)}_k$.
Until this point the choice of the particular problem and the operators yields a rigorous transformation. 
Hence, the blocks and the full iteration matrix of PFASST have exactly the same eigenvalues.
This translates to computing $N/2$ eigenvalues of matrices of the size $2ML \times 2ML$.
As we can see in \eqref{eq:basic_blocks_sama}, the basic blocks consists of $\matr{I}_L$ and $\matr{E}$ on the first layer. 
However, it is not directly possible to apply the transformation strategy presented above to this layer.
For an empirical study like LFA, though, only estimates of the spectral radii are needed. 
This is mainly due to the fact, that even the exact spectral radius does not reflect the direct numerical behavior of the method exactly, but rather asymptotically.
In the following section we therefore give up the rigorousness of the transformation in order to find a decomposition of the basic blocks into $L$ blocks of the size $2M \times 2M$.

\subsection{Assuming periodicity in time}
\label{ch:periodicity_in_time}

To enable the further decomposition of the basic blocks, we exchange in the matrix formulation 
\begin{align*}
  \matr{E}=
    \begin{pmatrix}
      0 & 0 & \cdots & 0 \\
      1 & 0 \\
      \vdots & & \ddots\\
      0 & & 1 & 0 
    \end{pmatrix} \quad \mbox{with} \quad 
    \matr{\hat{E}}=
    \begin{pmatrix}
      0 & 0 & \cdots & 1 \\
      1 & 0 \\
      \vdots & & \ddots\\
      0 & & 1 & 0 
    \end{pmatrix},
\end{align*}
which introduces time periodicity to the problem and makes the matrix circulant. 
Hence, it becomes easy to transform
\begin{align*}
  \left[  \matr{\Psi}^{-1} \matr{\hat{E}} \matr{\Psi} \right]_{j,j} = \exp\left(-i2\pi \frac{j}{L}\right),
\end{align*}
which makes the basic blocks $\matr{B}^{(M)}_j,\matr{B}^{(P)}_j$ and $\matr{B}^{(\tilde{P})}$  further decomposable into $NL$ or $NL/2$ blocks of the size $M \times M$ or $2M\times 2M$, respectively.
This leads directly to the following Theorem that can be proved using straightforward computations similar to the ones used before.
\begin{theorem}
  Let us have the identical requirements as in Theorem~\ref{th:transformed_iteration_matrix}, except the use of $\matr{\hat{E}}$ instead of $\matr{E}$.
  Then there exists a transformation $\tmatr{F}$ such that
\begin{align}
  \tmatr{\hat F}^{-1} \matr{T} \tmatr{\hat F} &= 
  \mathrm{diag}\left( 	\tmatr{B}_{0,0}^{(S)} \cdot \tmatr{B}_{0,0}^{(CGC)},
  			\tmatr{B}_{0,1}^{(S)} \cdot \tmatr{B}_{0,1}^{(CGC)}, \ldots,
			\tmatr{B}_{\frac{N}{2}-1,L-1}^{(S)} \cdot \tmatr{B}_{\frac{N}{2}-1,L-1}^{(CGC)} \right) ,\\ \mbox{with}\quad
  \tmatr{B}_{k,j}^{(S)} &= 
  \begin{pmatrix}
    \matr{I} - \left(\matr{B}_{k,j}^{(P)}\right)^{-1} \matr{B}^{(M)}_{k,j} & \\
       & \matr{I} - \left(\matr{B}_{\frac{N}{2} + k,j}^{(P)}\right)^{-1} \matr{B}^{(M)}_{\frac{N}{2} + k,j} 
     \end{pmatrix} \in \R^{2M \times 2M} \;\mbox{and}\\
  \tmatr{B}_{k,j}^{(CGC)} &= 
  \begin{pmatrix}
    \matr{I} - f_k d_k \left( \matr{B}^{(\tilde{P})}_{k,j} \right)^{-1} \matr{B}^{(M)}_{k,j} & -\hat{f}_k d_k\left( \matr{B}^{(\tilde{P})}_{k,j}\right)^{-1} \matr{B}^{(M)}_{N/2+k,j} \\
    -\hat{d}_k f_k \left(\matr{B}^{(\tilde{P})}_{k,j} \right)^{-1} \matr{B}^{(M)}_{k,j} & \matr{I} - \hat{f}_k \hat{d}_k\left( \matr{B}^{(\tilde{P})}_{k,j}\right)^{-1} \matr{B}^{(M)}_{N/2+k,j}
  \end{pmatrix} \in \R^{2M \times 2M},
  \label{eq:block_lfa_iteration_matrix}
\end{align}
with matrices $B_{k,j}^{(P)},B_{k,j}^{(M)} \in \R^{M\times M}$ for $k=0\ldots N-1, j= 0\ldots L-1$ and  $B_{k,j}^{(\tilde P)} \in \R^{M\times M}$  for $k=0\ldots \frac{N}{2} - 1, j=0\ldots L-1$, 
solely depending on the eigenvalues of $\matr{A}$ and $\matr{\tilde A}$, with
  \begin{align}
    \begin{split}
      \matr{\Psi}^T \matr{P} \matr{\Psi} &= \mathrm{diag}\left( \matr{B}^{(P)}_{0,0}, \ldots , \matr{B}^{(P)}_{N-1,L-1} \right), \mbox{with }\;
    \matr{B}^{(P)}_{k,j} = \matr{I} - \lambda_k^{(A)} \Delta t \matr{Q}_{\Delta} +  \exp\left(-i2\pi \frac{j}{L}\right) \matr{N},\\
    \matr{\Psi}^T \matr{M} \matr{\Psi} &= \mathrm{diag}\left( \matr{B}^{(M)}_{0,0}, \ldots , \matr{B}^{(M)}_{N-1,L-1} \right), \mbox{with }\;
    \matr{B}^{(M)}_{k,j} = \matr{I} - \lambda_k^{(A)} \Delta t \matr{Q} +  \exp\left(-i2\pi \frac{j}{L}\right)\matr{N},\\
    \matr{\Psi}^T \matr{\tilde{P}} \matr{\Psi} &= \mathrm{diag}\left( \matr{B}^{(\tilde{P})}_{0,0}, \ldots , \matr{B}^{(\tilde{P})}_{\frac{N}{2}-1,L-1} \right), \mbox{with }\;
    \matr{B}^{(\tilde{P})}_{k,j}=  \matr{I} - \lambda_k^{(\tilde{A})} \Delta t \matr{Q}_{\Delta} +  \exp\left(-i2\pi \frac{j}{L}\right) \matr{N}.
    \label{eq:block_lfa_blocks}
    \end{split}
  \end{align}
We denote those blocks as ``collocation blocks'' in contrast to the time-collocation blocks of Theorem~\ref{th:transformed_iteration_matrix}.
  \label{th:transformation_iteration_matrix_blfa}
\end{theorem}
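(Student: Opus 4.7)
The plan is to follow the same transformation strategy used in Theorem~\ref{th:transformed_iteration_matrix}, but now also exploit that $\matr{\hat E}$ is circulant, so it admits a rigorous Fourier diagonalization on the temporal layer. Because $\matr{\hat E}$ has the form \eqref{eq:circulant_matrix}, formulas \eqref{eq:eig_vals_circ_matr} apply directly and give $[\matr{\Psi}_L^T \matr{\hat E}\matr{\Psi}_L]_{j,j}=\exp(-i 2\pi j/L)$ with an orthogonal transformation matrix $\matr{\Psi}_L\in\mathbb{C}^{L\times L}$. The extended transformation is then
\begin{align*}
\tmatr{\hat F} = \tmatr{P}\cdot(\matr{\Psi}_L\otimes\matr{I}_N\otimes\matr{\Psi}),
\end{align*}
with $\tmatr{P}$ a suitable permutation, so that $\tmatr{\hat F}$ diagonalises all three layers (time, space, quadrature nodes) at once in the sense of \eqref{eq:transformation_rules}.

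Next I would apply $\tmatr{\hat F}^{-1}\,\cdot\,\tmatr{\hat F}$ to each of $\matr{M}, \matr{P}, \matr{\tilde P}$. Each of these admits a Kronecker representation like~\eqref{eq:M_3_layers}, in which the $L$-layer factors are only $\matr{I}_L$ and $\matr{\hat E}$ and the other factors act on disjoint tensor slots. The Kronecker product rule $(\matr{A}\otimes\matr{B})(\matr{C}\otimes\matr{D})=(\matr{AC})\otimes(\matr{BD})$ and \eqref{eq:transformation_rules} turn the $LM\times LM$ basic blocks of Theorem~\ref{th:transformed_iteration_matrix} into $L$ further blocks of size $M\times M$, each indexed by the temporal mode $j$ and carrying the scalar factor $\exp(-i2\pi j/L)$ in front of $\matr{N}$. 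This yields exactly the collocation blocks \eqref{eq:block_lfa_blocks}.

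For the transfer operators I would invoke the hypothesis that $\matr{T}_C^F$ and $\matr{T}_F^C$ act blockwise on the subintervals; i.e.\ they have the Kronecker form $\matr{I}_L\otimes \matr{T}_C^F$ and $\matr{I}_L\otimes \matr{T}_F^C$ on the $L$-layer. Hence they commute with $\matr{\Psi}_L\otimes\matr{I}\otimes\matr{I}$ in the extended transformation, and the coarse-grid correction transforms exactly as in Lemma~\ref{lem:transformation_intpl_rstr} and the computation leading to~\eqref{eq:permutate_cg_2}, but now at the level of each temporal mode $j$ separately. The same permutation argument that produced $\tmatr{B}^{(S)}_k$ and $\tmatr{B}^{(CGC)}_k$ in Theorem~\ref{th:transformed_iteration_matrix} can now be used fiberwise in $j$ to pair the harmonics $(k,\tfrac{N}{2}+k)$, giving the $2M\times 2M$ blocks $\tmatr{B}^{(S)}_{k,j}$ and $\tmatr{B}^{(CGC)}_{k,j}$ of \eqref{eq:block_lfa_iteration_matrix}.

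The main obstacle is bookkeeping rather than any deep step: one must check that the permutations used to gather harmonics in the spatial layer and those used to separate temporal modes combine consistently, so that the diagonal structure of the fully-transformed iteration matrix indeed reads as a product $\tmatr{B}^{(S)}_{k,j}\tmatr{B}^{(CGC)}_{k,j}$ for every pair $(k,j)$. Once the commutation of $\matr{\Psi}_L$ with the blockwise transfer operators is observed and the Kronecker identities are applied carefully, the remaining computation is the same kind of straightforward algebra used in the proof of Theorem~\ref{th:transformed_iteration_matrix}, and the desired block-diagonal form follows.
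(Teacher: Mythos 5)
Your proposal is correct and follows essentially the same route as the paper, which itself only asserts that the result "can be proved using straightforward computations similar to the ones used before'' after noting that $\matr{\hat{E}}$ is circulant and hence diagonalized with eigenvalues $\exp(-i2\pi j/L)$. Your write-up simply makes explicit the same ingredients the paper relies on: the Fourier diagonalization of the now-circulant temporal layer, the Kronecker-product bookkeeping, and the fact that the block-wise transfer operators act as the identity on the $L$-layer and therefore commute with the temporal transform.
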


This leaves us with $NL/2$ blocks of the size $2M\times 2M$.
We identify the matrices $\matr{Q}$ and $\matr{Q}_{\Delta}$ as the atomic part of the whole matrix formulation.
Further decompositions may only be performed if a decomposition of $\matr{Q}$ is found.  
In the case of a $\matr{Q} \in \R^{1 \times 1}$, 
the time stepping part reduces to e.g. an implicit Euler.
In this case no eigenvalue computations are necessary any more and 
the Fourier symbols are easily derived from the basic collocation blocks.  

\begin{remark}
With the assumption of periodicity in time we loose the initial value,
which means that if $u(t,x)$ is a solution of the problem then $u(t,x)+c$ is also a solution for any $c \in \R$. 
Hence, the inverses of $\matr{B}_{k,0}^{(P)}$ and $\matr{B}_{k,0}^{(\tilde{P})}$ do not exist and neither do the inverses of iteration matrix blocks $\tmatr{B}_{k,0}^{(T)}=\tmatr{B}_{k,0}^{(S)}\cdot\tmatr{B}_{k,0}^{(CGC)}$ exist. 
Our remedy for this problem is to set $\tmatr{B}_{k,0}^{(T)}$ to $\matr{0}$. 
This blocks belong to constant modes and we assume that there are no constant error modes which have to be damped.
  \label{rm:periodicity_in_time}
\end{remark}

Based on this transformation of the iteration matrix, 
we are now able to investigate the behavior of PFASST for two standard
model problems in the following section.

\section{Numerical Experiments}
\label{ch:numerical_experiments}
In this section we show how the convergence properties of PFASST may be examined along the lines of two examples, namely the diffusion and the advection problem.
Within this paper though, a full analysis of the influence all the parameters like $N$, $L$, $M$,  $\Delta t$, the choice of the quadrature rule or the PDE parameters is not possible.
Therefore, the experiments presented here do not aim for a complete analysis, they should rather be viewed as a recipe to analyze PFASST for a certain class of problems, defined by the requirements we posed for the theoretical results above.

All computations are performed with $N=128$ degrees of freedom in space.
For matrices and vectors, the infinity norm is used. 
For the advection problem, 
we will use the SDC algorithm with the LU-based preconditioner $\matr{Q}_\Delta$ as in~\cite{Weiser2013}, while for the diffusion problem $\matr{Q}_\Delta$ is the standard implicit Euler method.
For all experiments we use $M = 5$ Gau\ss-Radau nodes on each of the $L=4$ subintervals of the length $dt=0.1$. 
Hence, we have $T=0.4$. 
The interpolation is constructed such that polynomials up to order $6$ are interpolated exactly and the restriction is gained from an interpolation operator which interpolates polynomials up to the order of $2$.

Our main goal will be the estimation of the error by using the block form of the iteration matrix of PFASST. 
With blocks $\matr{B}_k$, the computation of the norm of a matrix $\matr{H}$ reduces to
\begin{align}
  \norm{\matr{H}}^{2} = \sup_{\vect{x}_k \neq 0} \frac{\sum_{k=1}^{m} \norm{\matr{B}_k\vect{x}_k}^{2}}{\sum_{k=1}^{m} \norm{\vect{x}_k}^{2}}
  = \max_k\sup_{\vect{x}_k \neq 0} \frac{\norm{\matr{B}_k\vect{x}_k}^{2}}{\norm{\vect{x}_k}^{2}}
  = \max_k\norm{\matr{B}_k}^{2},
  \label{eq:norm_and_blocks}
\end{align}
see \cite{trottenberg2000multigrid} for a proof.
The same holds for the computation of the spectral radii.
In addition, the effort of computing the eigenvalues 
of $N/2$ time-collocation blocks of the size $2LM\times 2LM$ is obviously less than
for a $MLN\times MLN$ matrix. 
With the assumption in Section \ref{ch:periodicity_in_time} 
it even reduces to the computation of $NL/2$ collocation blocks of the size $2M \times 2M$.

 For both cases (time collocation and collocation blocks) we consider the following strategies for the estimation of the error vector $e^{\kappa}$ of the $\kappa$ iteration 
\begin{enumerate}
  \item use the spectral radius $\rho(\matr{T})$ of the iteration matrix
  \item use the norm of the iteration matrix $\norm{\matr{T}}$ 
  \item use the norm of the $\kappa$-th potency of the iteration matrix $\norm{\matr{T}^\kappa}$ 
  \item apply $\kappa$-th times the iteration matrix to the known error vector 
\end{enumerate}

The first strategy is based on the inequality for consistent matrix norms $\|\cdot\|$ and
each $\kappa \in \N$ 
\begin{align}
  \rho(A)\leq \|A^{\kappa}\|^{\frac{1}{\kappa}},
  \label{eq:spec_rad_estimate}
\end{align}
see \cite{kelleyc1995iterative}.
Strategies 2 and 3 rely on the inequality
\begin{align}
  \norm{\vect{e}^{\kappa}} = \norm{\matr{T}^{\kappa}\vect{e}^0} \leq \norm{\matr{T}^{\kappa}}\norm{\vect{e}^0} \leq \norm{\matr{T}}^{\kappa}\norm{\vect{e}^0}.
  \label{eq:norm_estimates}
\end{align}
Note that the iteration matrix is separated from the initial error vector $\vect{e}^0$, 
and therefore an a priori estimation of the relative error reduction is possible for this strategies.
In contrast, the error vector $\vect{e}^0$, i.e.~the analytical solution has to be known for strategy 4, making it an a posteriori strategy.
If time collocation blocks are used, the computation following strategy $4$ yields the analytically correct error for each iteration.
Using collocation blocks, this approach just provides another estimate.

\subsection{Diffusion problem}
\label{ch:diffusion_problem}

The elliptic Poisson problem is often used in the multigrid literature 
to  demonstrate the basic ideas of multigrid, see e.g.~\cite{trottenberg2000multigrid}.
Hence, the time-dependent, parabolic version of it, i.e.~the classical heat equation, is a canonical candidate for the analysis of a multigrid-like time integration method like PFASST.

The problem in one spatial dimension is given by
\begin{align}
  \begin{split}
  u_t &= \nu \Delta u, \quad x \in [0,1] \mbox{ and } t \in [0,T]\\
  u(x,0) &= u_0(x), \quad u(0,t) = u(1,t), \quad t \in [0,T]
  \end{split}
  \label{eq:heat_eq}
\end{align}
for a time $T>0$ and the diffusion coefficient $\nu>0$.
Using second-order finite differences on a isometric grid we get a simple discretization in the spatial dimension with
\begin{align}
  X = [x_1, \ldots , x_N], \mbox{ with } x_j = \frac{j-1}{N} \mbox{ and } \Delta x = \frac{1}{N}
\end{align}
which leads to a system of linear ODEs
\begin{align}
    \begin{split}
  \xvect{U}_t(t)&= \matr{A}\xvect{U}(t), \quad t \in [0,T] \mbox{ and } \xvect{U}(0) = [u(x_1,0),\ldots, u(x_N,0)],  \\
  \mbox{with} \quad \matr{A} &= \frac{\nu}{\left( \Delta x \right)^2}
  \begin{pmatrix}
    2 & -1 & 0 &\cdots & -1 \\
    -1& 2  & -1 & & \\
     0 & \ddots & \ddots & \ddots & 0\\
     \vdots &  & -1 & 2& -1\\
      -1 & 0 &  \cdots& -1 & 2 
    \end{pmatrix} \in \R^{N \times N}.
  \end{split}
  \label{eq:discrete_heat_eq}
\end{align}
Because the matrix $\matr{A}$ is circulant, 
the spectral decomposition in eigenvalues and eigenvectors is easily computed. 
For the eigenvalues $\lambda_k$ and 
normal eigenvectors $\psi_k$, $k\in \left\{ 0,\ldots, N-1 \right\}$, we have
\begin{align}
  \lambda_k = \frac{4\nu}{\Delta x^2} \sin^2\left(\frac{k\pi}{N}\right) \quad \mbox{and} \quad 
     \psi_k = \frac{1}{\sqrt{N}} \left[\exp\left(i\frac{2\pi}{N}k\cdot 0\right),\ldots, \exp\left(i\frac{2\pi}{N}k\cdot (N-1)\right)\right]^T.
  \label{eq:eig_vals_heat_eq}
\end{align}

\subsubsection{The error vector}

For initial values given by the function 
\begin{align}
  u_0(x) = \sin\left( 2 \pi x k \right), \quad k \in \left\{ 1,\ldots,N-1 \right\},\quad x \in \left[ 0,1 \right],
  \label{eq:heat_eq_simple_initial_functions}
\end{align}
we know that solution to our PDE with periodic boundary conditions is given by
\begin{align*}
  u(t,x) = \exp\left( -\nu \left( 2\pi k \right)^{2}t \right)\sin\left( 2\pi k x \right).
\end{align*}
Usually the PFASST algorithms starts with an vector where the initial value is spread on each node, i.e. we have the initial error vector
\begin{align*}
  \vect{e}^{0} = \begin{pmatrix}1 - \exp(-\nu(2\pi k)^2t_0 + \tau_1) \\ \vdots \\1-\exp(-\nu(2\pi k)^2 T) \end{pmatrix}\otimes\begin{pmatrix} \sin(2\pi kx_1) \\ \vdots \\ \sin(2\pi k x_{N}) \end{pmatrix}.
\end{align*}
With the iteration matrix we compute the succeeding error vector for PFASST as
\begin{align*}
  \matr{T} \vect{e}^{\kappa} = \vect{e}^{\kappa + 1}.
\end{align*}
Like the iteration matrix, the error vector $\vect{e}^k$ of the $k$-th iteration itself can be transformed and decomposed into parts belonging
to a certain mode and associated with the TC-block of the iteration matrix. 
We can write
\begin{align*}
  \tmatr{F}^{-1} \matr{T} \tmatr{F} \tmatr{F}^{-1} \vect{e}^{\kappa} = \tmatr{F}^{-1}\vect{e}^{\kappa + 1}.
\end{align*}
The transformed error is thus $ \vect{\hat{e}}^{\kappa} =  \tmatr{F}^{-1} \vect{e}^{\kappa} $, 
following precisely the transformation procedure 
described in Section~\ref{ch:transforming_pfasst}. 
The initial value function $u_0(x)$ decomposes into two modes, 
which are represented by spatial Fourier space functions
\begin{align*}
  \sin(2\pi kx_j) &= \frac{1}{2i} \exp\left(i \frac{2\pi}{N} kx_j\right) - \frac{1}{2i} \exp\left(-i \frac{2\pi}{N} kx_j\right) \\
  &= \frac{\sqrt{N}}{2i}\left[ \psi_k  \right]_j - \frac{\sqrt{N}}{2i}\left[  \psi_{N-k} \right]_j 
\end{align*}
and belong to two different harmonics. This reduces our analysis to the blocks belonging to these certain harmonics, which are $\tmatr{B}^{(T)}_{k},\tmatr{B}^{(T)}_{\frac{N}{2}-k}$ of sizes $2LM$ for $k=0, ..., N/2-1$ in the case time-collocation blocks are considered and $\tmatr{B}^{(T)}_{k,j}, \tmatr{B}^{(IT)}_{\frac{N}{2}-k,j}$ of sizes $2M$ for $k=0, ..., N/2-1$ and $j=0, ..., L-1$ if collocation blocks are considered. 

\subsubsection{Error prediction}
\label{ch:error_prediction}

We choose $\nu$ so that $\mu = 10$.
In Figure \ref{fig:res_finding_2}, observing the solid line of the actual error measured during the iterations, we first see a short-term convergence behavior until roughly $10^{-4}$ which is then followed by a much slower, long-term convergence phase.
We see that the use of the norm of the $k$-th potency of the iteration matrix $\matr{T}$ (strategy 3) is well-suited to capture the long-term convergence behavior.
This is of course also true for strategy 1, using the spectral radius of the iteration matrix. 
Similar plots for various initial value functions $\sin\left( 2\pi kx \right)$ for $k\in \left\{ 1,\ldots, N/2-1 \right\}$ , 
were inspected and showed the same behavior for the long-term convergence. 
In particular, there is no significant difference between time-collocation and collocation blocks.
However, the norm of the iteration matrix is greater than $1$ for most cases, 
as a survey over different $\mu\in \left( 0.01,100 \right)$ and $L \in \left\{ 2,\ldots,50 \right\}$ showed.
This renders strategy 2 useless for most of the cases we considered so far.

The short-term convergence on the other hand is not captured by the first 3 strategies.
In contrast, strategy 4 does this very well, as we see in Figure \ref{fig:res_finding_3}.
We also see that the short-term convergence is faster for initial values with a small wave number $k$ and that the long-term convergence speed is almost independent from the initial value. 
Our interpretation is that PFASST is more efficient in reducing the low frequency error modes in space. 
After the first convergence phase, the error consists of a mixture of modes, which is reduced by PFASST likewise, independently from the initial value frequency.

\begin{figure}
	\centering	
	\subfloat[time-collocation blocks\label{fig:sama_var_predict_strategies}]{\includegraphics[width=0.48\textwidth]{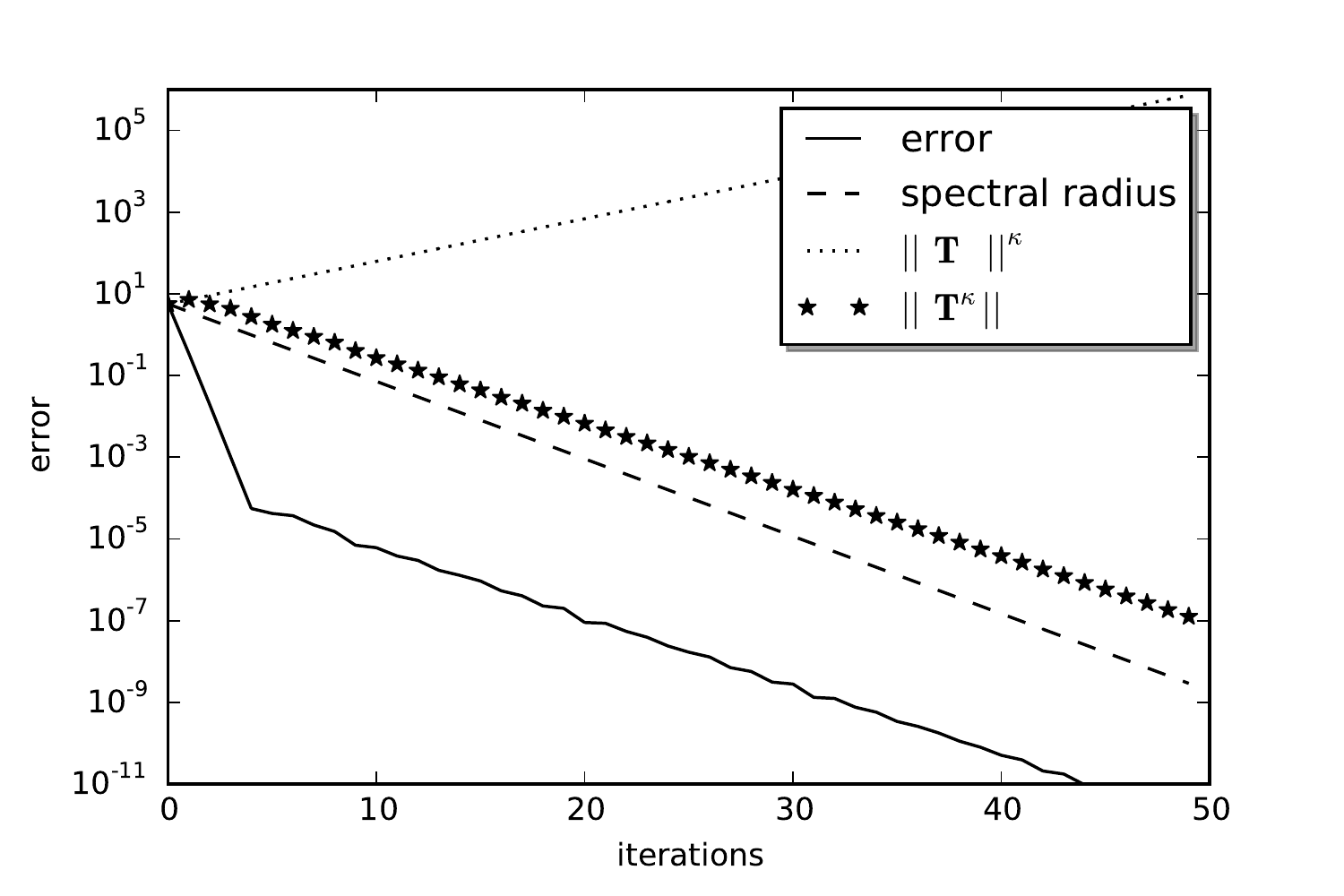}}
	\subfloat[collocation blocks\label{fig:lfa_var_predict_strategies}]{\includegraphics[width=0.48\textwidth]{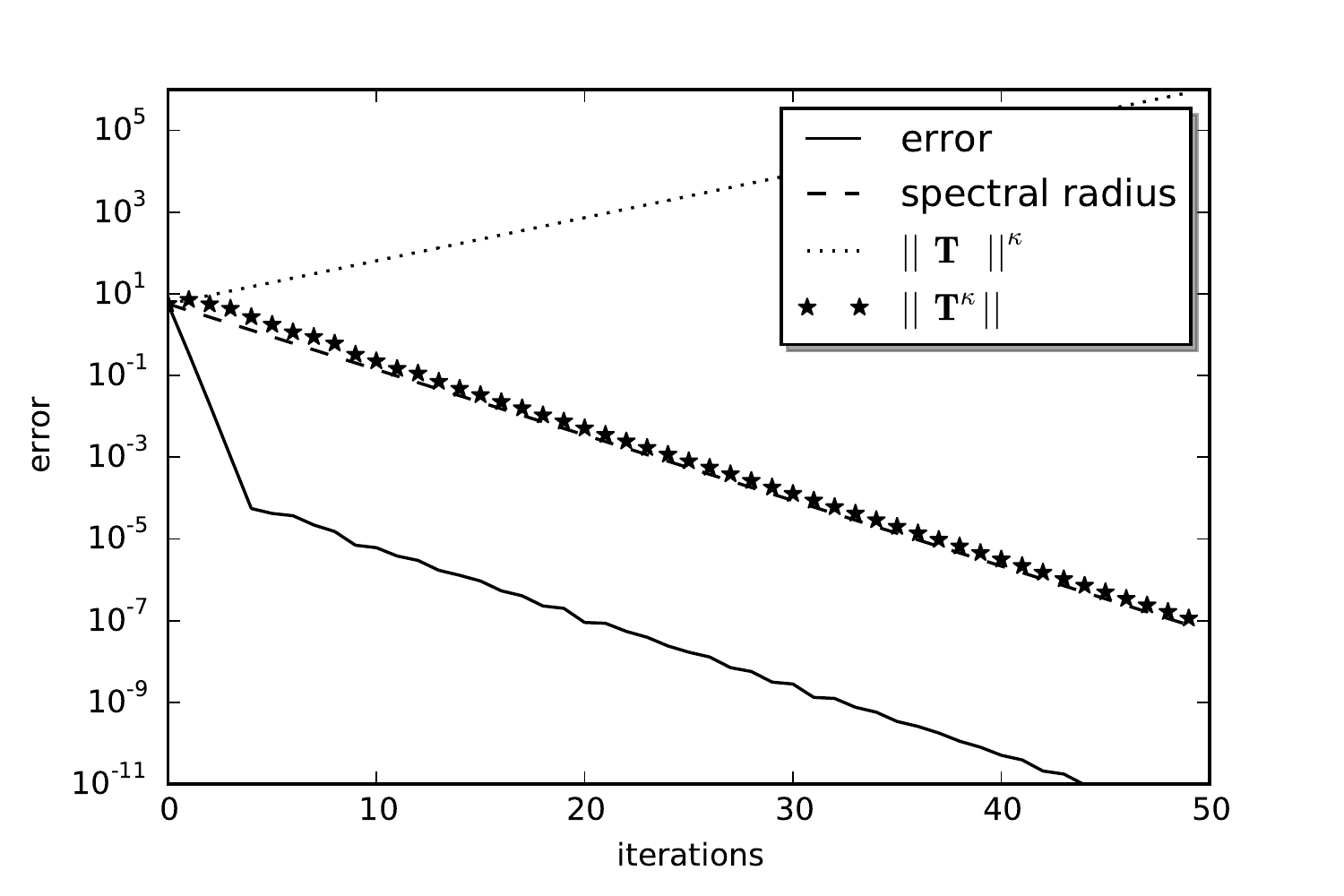}} 
	\caption{
The errors estimates from the strategies 1 to 3, 
compared to the actual error plotted against the number of iterations of PFASST, 
for the diffusion problem with an initial value function of $\sin(2\pi 8 x)$. 
	}
	\label{fig:res_finding_2}
\end{figure}

Here we actually see a difference between the different types of blocks: For the error prediction of the first iterations, using strategy 4 with collocation blocks is not as accurate as using time collocation blocks.
In contrast, no differences in the quality of the error prediction are notable in the long-term convergence phase, again.

\begin{figure}[th]
	\centering	
	\subfloat[\label{fig:sama_strat_4}]{\includegraphics[width=0.48\textwidth]{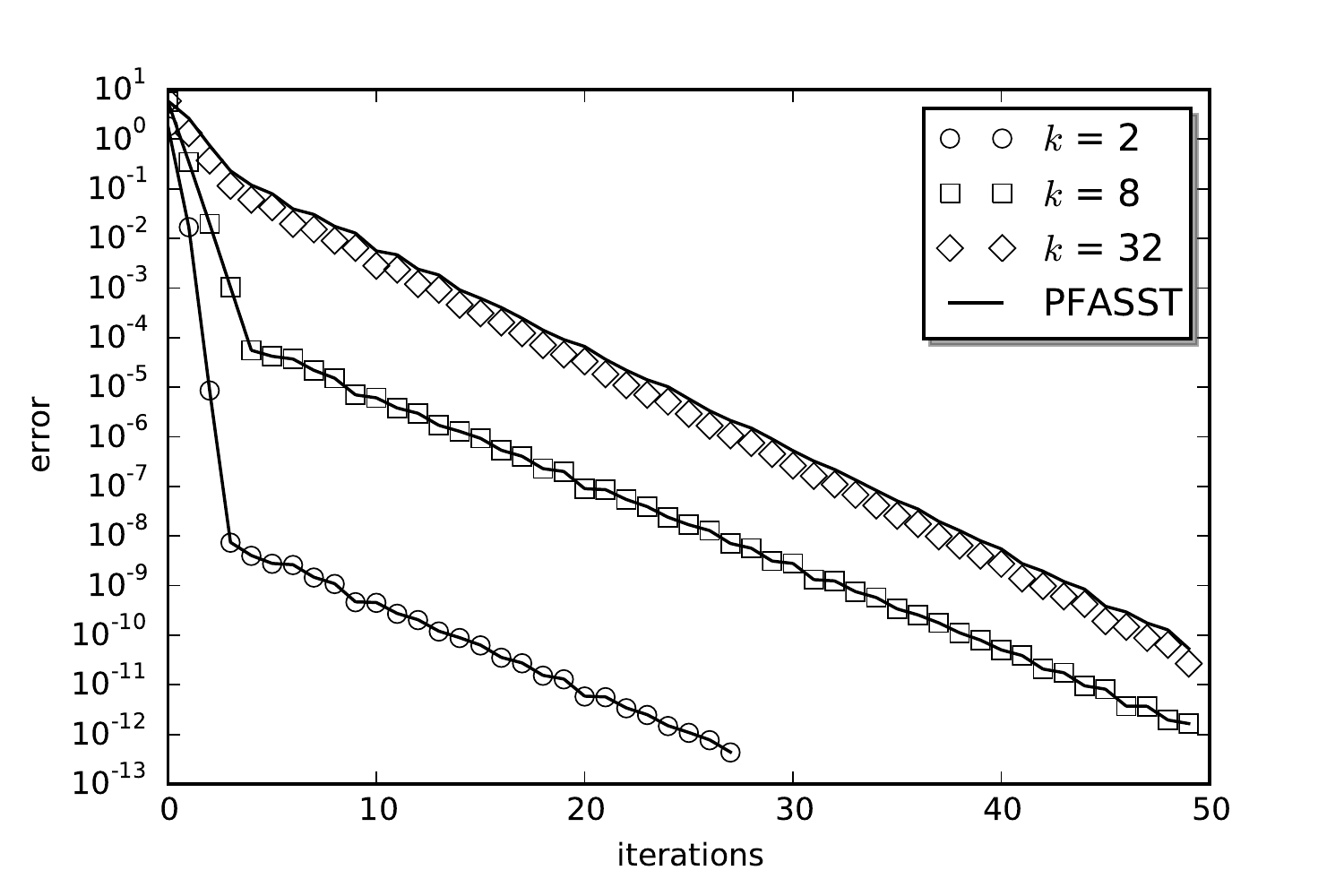}}
	\subfloat[\label{fig:lfa_strat_4}]{\includegraphics[width=0.48\textwidth]{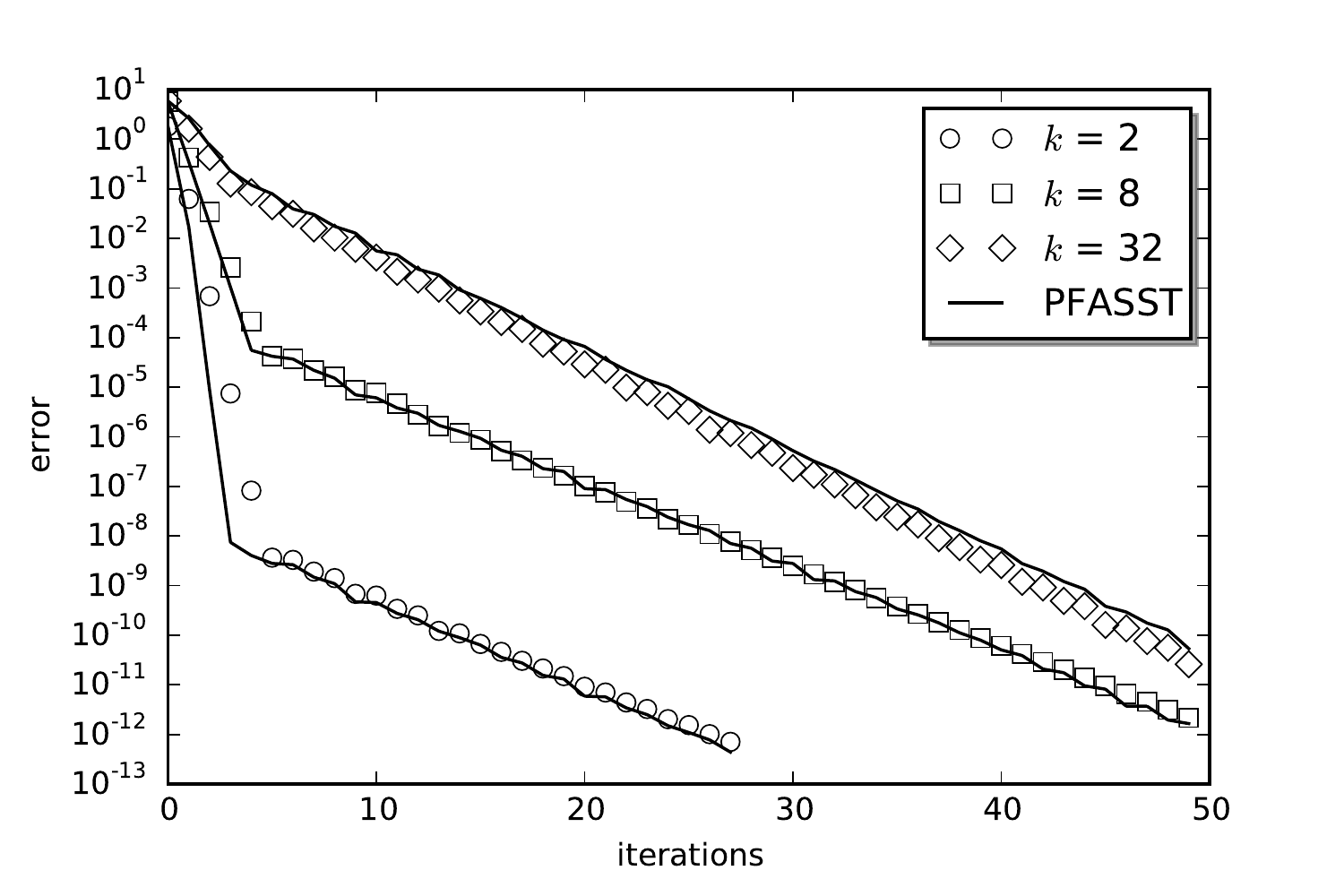}} \newline \centering
	\subfloat[\label{fig:sama_err_diff}]{\includegraphics[width=0.48\textwidth]{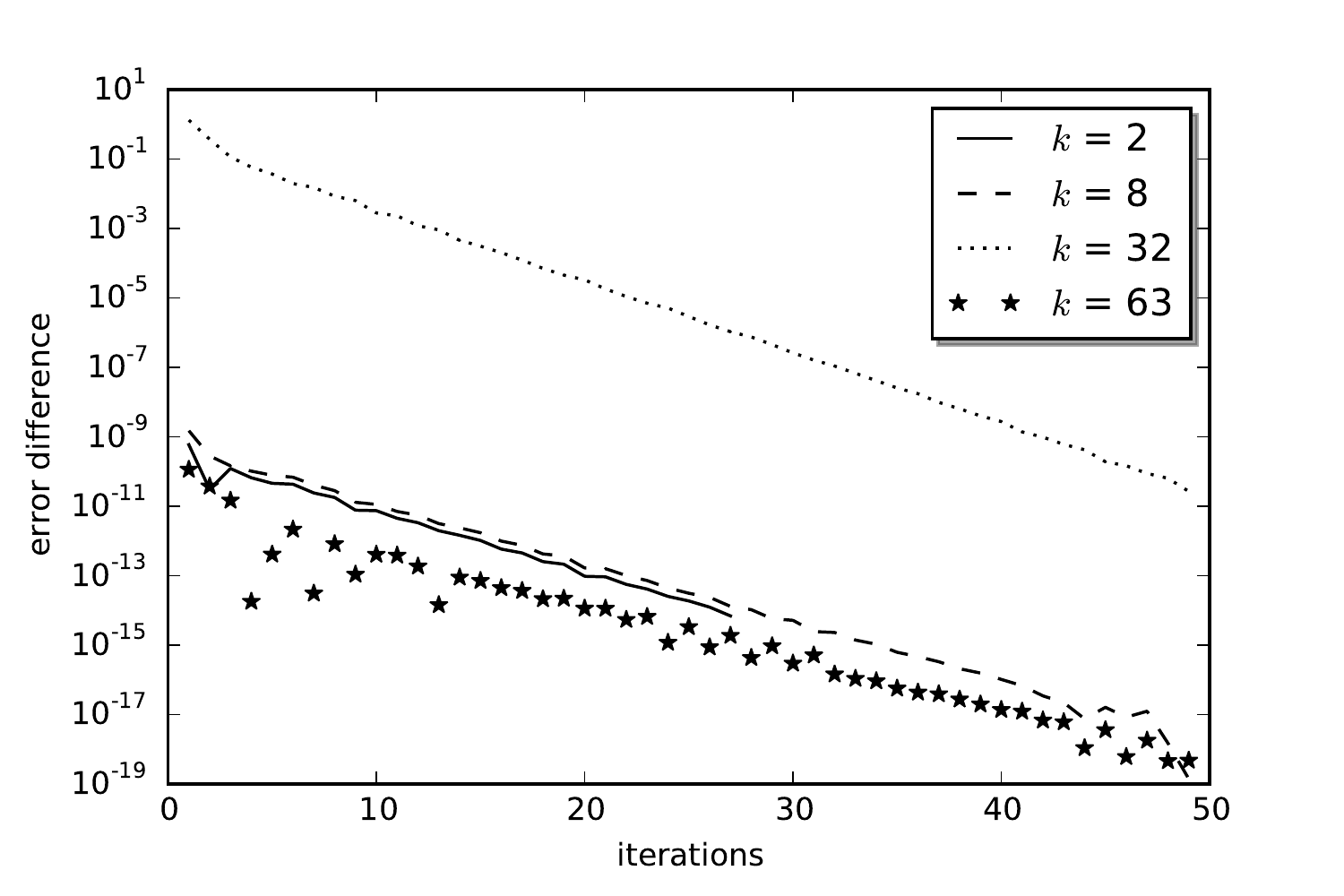}}
	\subfloat[\label{fig:lfa_err_diff}]{\includegraphics[width=0.48\textwidth]{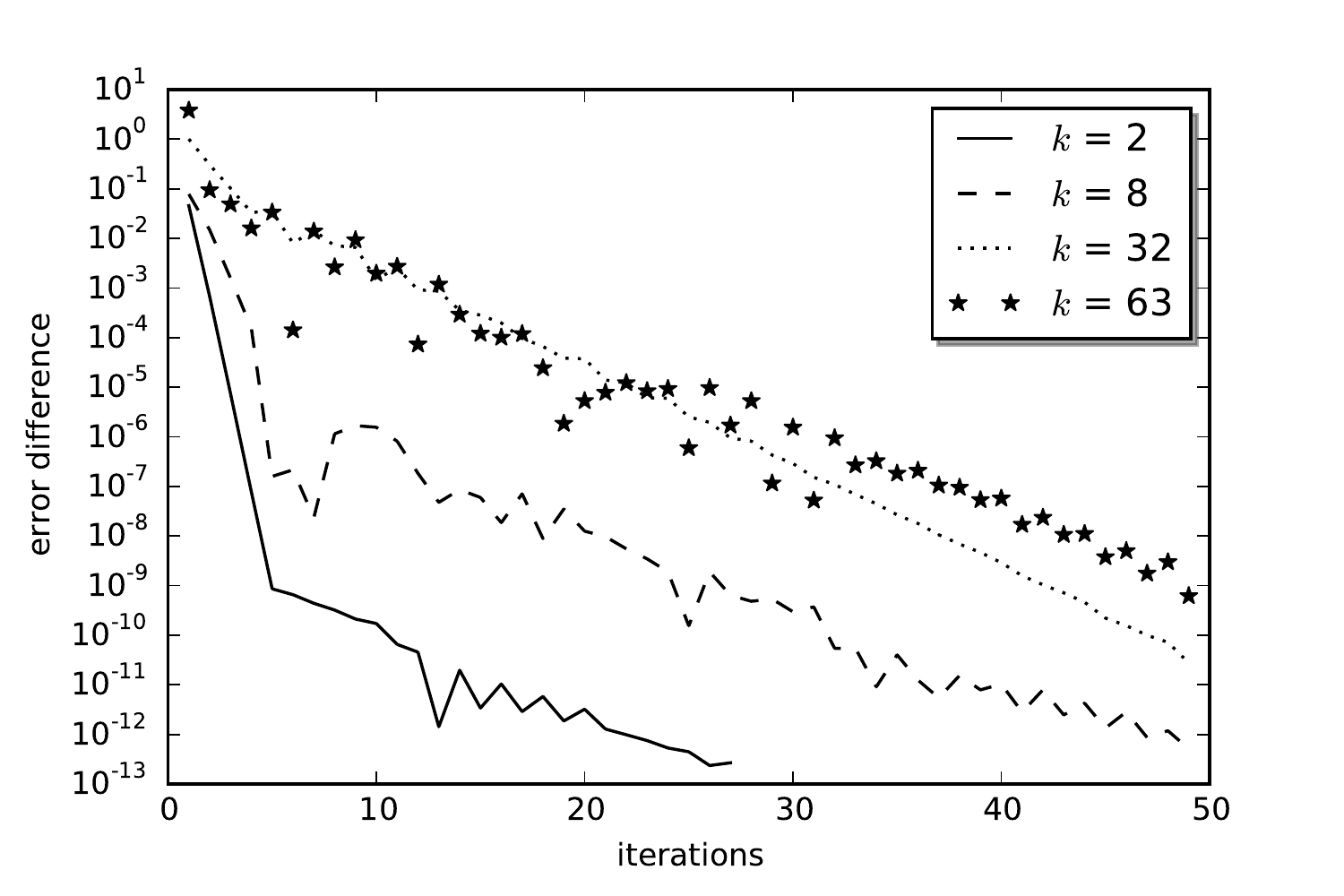}}	
	\caption{Strategy 4 for various $k$, using time collocation blocks on the left and collocation blocks on the right for the diffusion problem.
		 In the second row the difference between the error and the error prediction is plotted.
	}
	\label{fig:res_finding_3}
\end{figure}

\subsection{Advection problem}
\label{ch:advection_problem}

The second prototype problem is the 1D advection equation, given by
\begin{align}
  \begin{split}
  u_t &= c u_x, \quad  x \in [0,1] \mbox{ and } t \in [0,T]\\
  u(x,0) &= u_0(x), \quad u(0,t) = u(1,t) \quad t \in [0,T],
  \end{split}
  \label{eq:adv_eq}
\end{align}
with advection coefficient $c>0$.
The discretization is done in the same manner as~\eqref{eq:discrete_heat_eq}, but
we use an upwind difference stencil of the order $3$, instead of a central difference stencil. 
This yields again a circulant matrix $\matr{F}_D$, 
with eigenvalues and eigenvectors according to \eqref{eq:eig_vals_circ_matr}.
For the numerical experiments we use advection speed $c =4.88 \cdot 10^{-3}$, 
resulting in a CFL number of $62.5 \cdot 10^{-3}$. 
The discretization in space and time is similar to the discretization in the previous section.

\subsubsection{The error vector}

For a initial value function $u_0$ the solution reads
\begin{align*}
  u(t,x) = u_0( x-ct).
\end{align*}
We use again the initial values given by~\eqref{eq:heat_eq_simple_initial_functions}.
The initial values are spread on each node, this yields the initial error vector
\begin{align*}
    \vect{e}^{0} = \left(\vect{e}^{0}_{1}, ..., \vect{e}^{0}_{L}\right)^T
\end{align*}
with
\begin{align*}
  \vect{e}^{0}_{n} &= \left(\vect{e}^{0}_{n,1}, ..., \vect{e}^{0}_{n,M}\right)^T\\
  \vect{e}^{0}_{j,m} &= \left(u_0(x_1) - u_0(x_1-c(t_{j-1}+\tau_m)), ..., u_0(x_N) - u_0(x_N-c(t_{j-1}+\tau_m))\right)
\end{align*}

When the class of initial value~\eqref{eq:heat_eq_simple_initial_functions} is used, 
the initial values can be decomposed again into two modes, belonging to different harmonics.
The analysis is thus again reduced to certain harmonic blocks $\tmatr{B}^{(T)}_{k}, \tmatr{B}^{(T)}_{\frac{N}{2}-k}$
and $\tmatr{B}^{(T)}_{k,j}, \tmatr{B}^{(T)}_{\frac{N}{2}-k,j}$, respectively.
Note, that this computation of the error vector works even if only a numerical solution to the problem is given.

\subsubsection{Error prediction}

For the advection problem the four strategies yield significantly different results than for the diffusion problem.
In Fig.~\ref{fig:adv_strat_1_3} we now observe three phases of convergence: two rapid phases at the beginning and at the end and one almost stagnating phase in the middle. 
We observed these phases for all initial wave numbers $\kappa$, 
with the peculiarity of a decreasing, almost vanishing first phase for increasing $\kappa$.
Regarding the different strategies, we see that only the spectral radii (strategy 1) is able to capture the first phase, 
while the norm of the powers of the iteration matrix (strategy 3) captures the last phase.

\begin{figure}[th]
	\centering	
	\subfloat[time-collocation blocks\label{fig:adv_sama_var_predict_strategies}]{\includegraphics[width=0.48\textwidth]{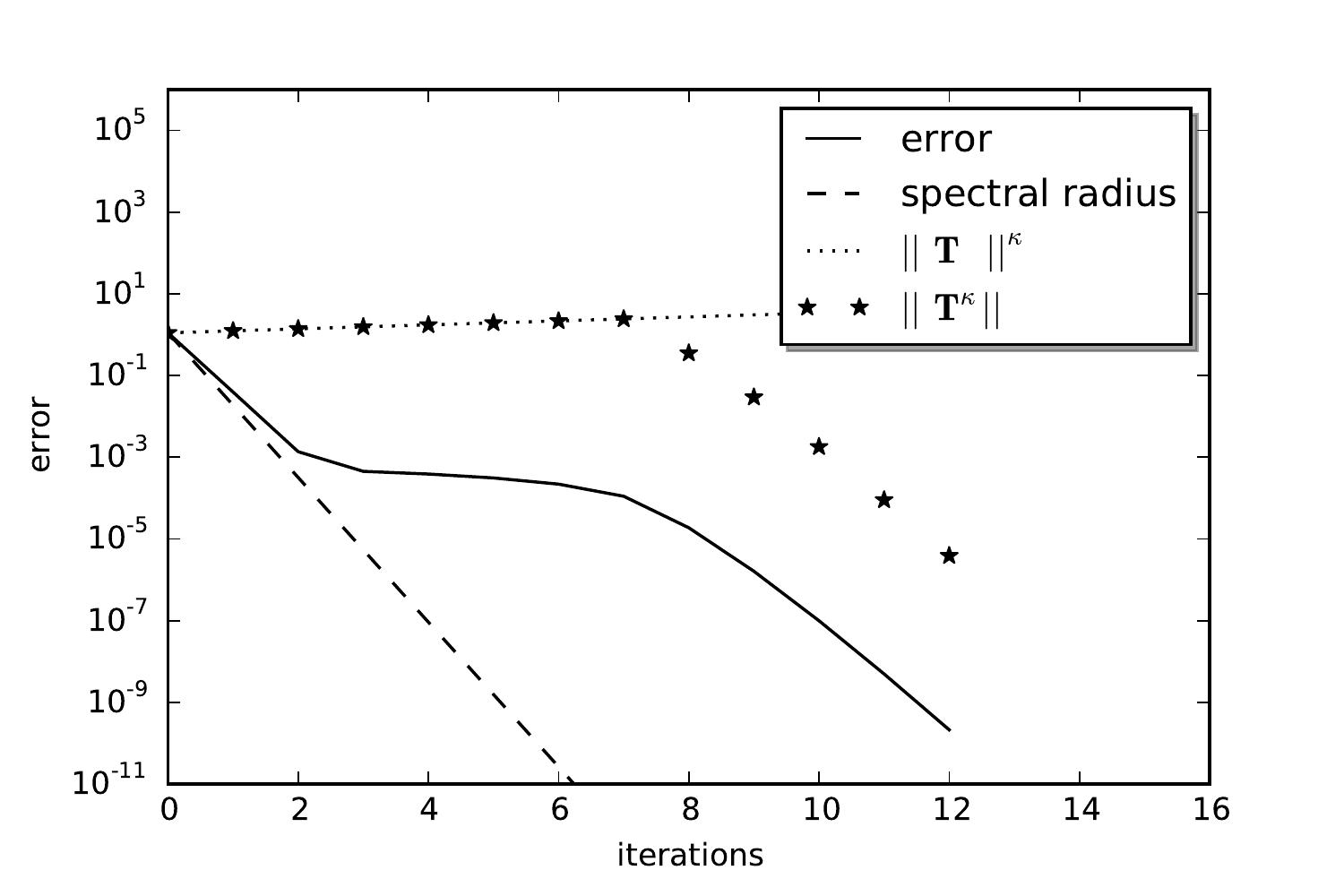}}
	\subfloat[collocation blocks\label{fig:adv_lfa_var_predict_strategies}]{\includegraphics[width=0.48\textwidth]{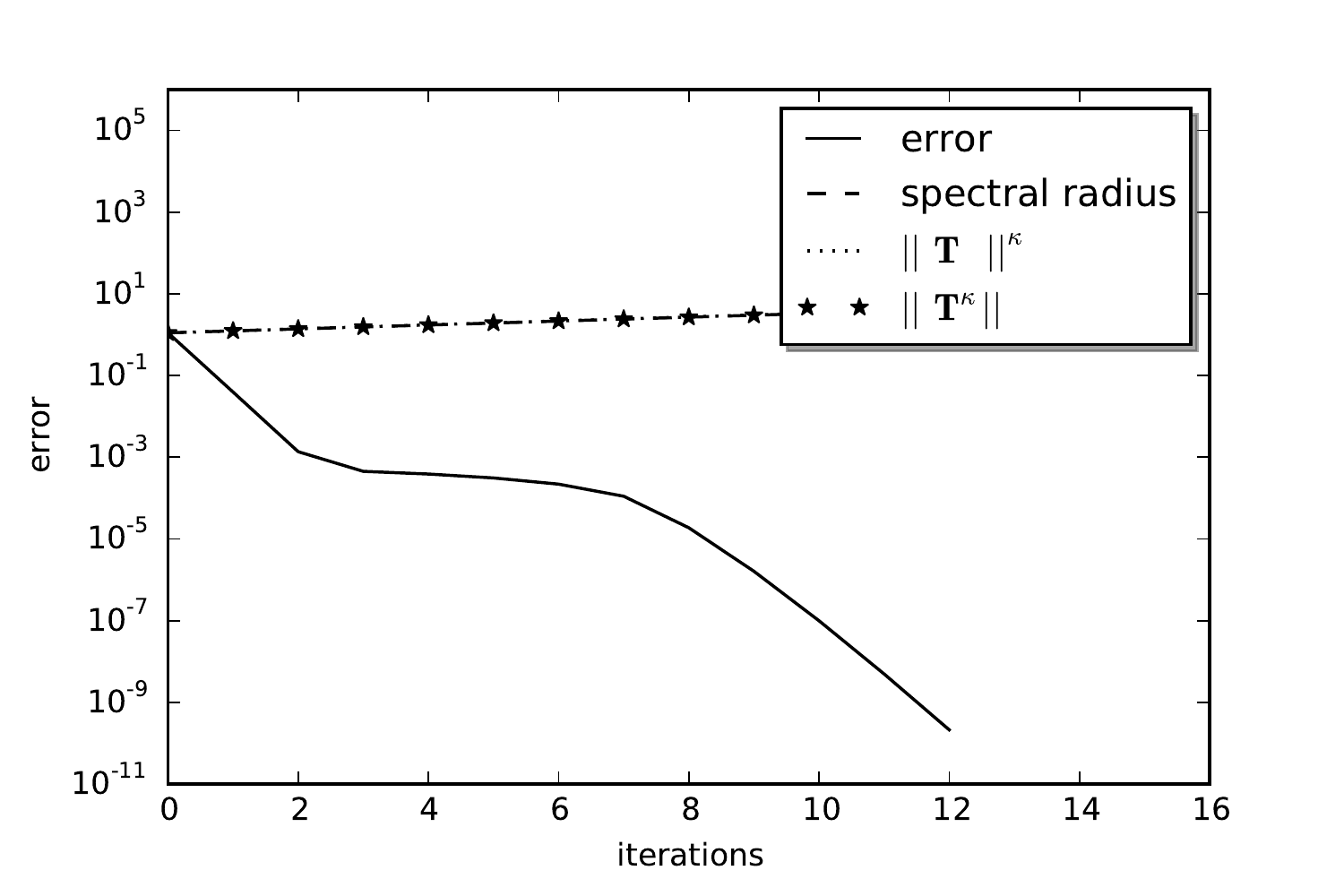}}  	
	\caption{
The errors estimates from the strategies 1 to 3, 
compared to the actual error plotted against the number of iterations of PFASST. 
Advection problem with an initial value function of $u_0(x) = \sin(2\pi 8 x)$. 
	}
	\label{fig:adv_strat_1_3}
\end{figure}

Again strategy 4 is successful in exactly predicting the error, 
when time collocation blocks are used.
On the other hand, for the advection equation the use of collocation blocks only serve as an assessment for initial values with high $k$, and then only for the first phase.
Obviously, the assumption of periodicity in time is not valid for advection-dominated problems. 
Thus, time collocation blocks should be considered in this case.

\begin{figure}[th]
	\centering	
	\subfloat[\label{fig:adv_sama_strat_4}]{\includegraphics[width=0.48\textwidth]{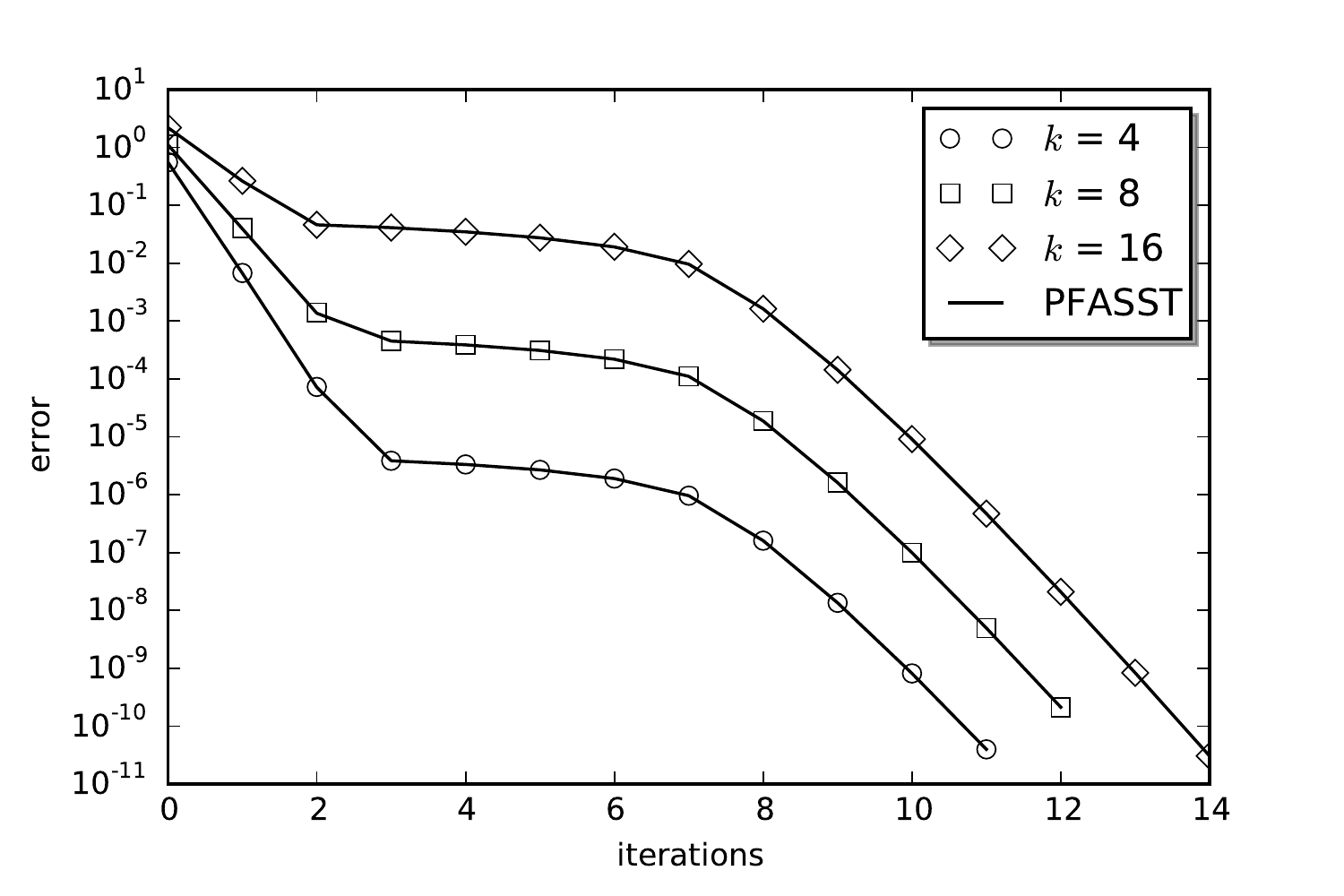}}
	\subfloat[\label{fig:adv_lfa_strat_4}]{\includegraphics[width=0.48\textwidth]{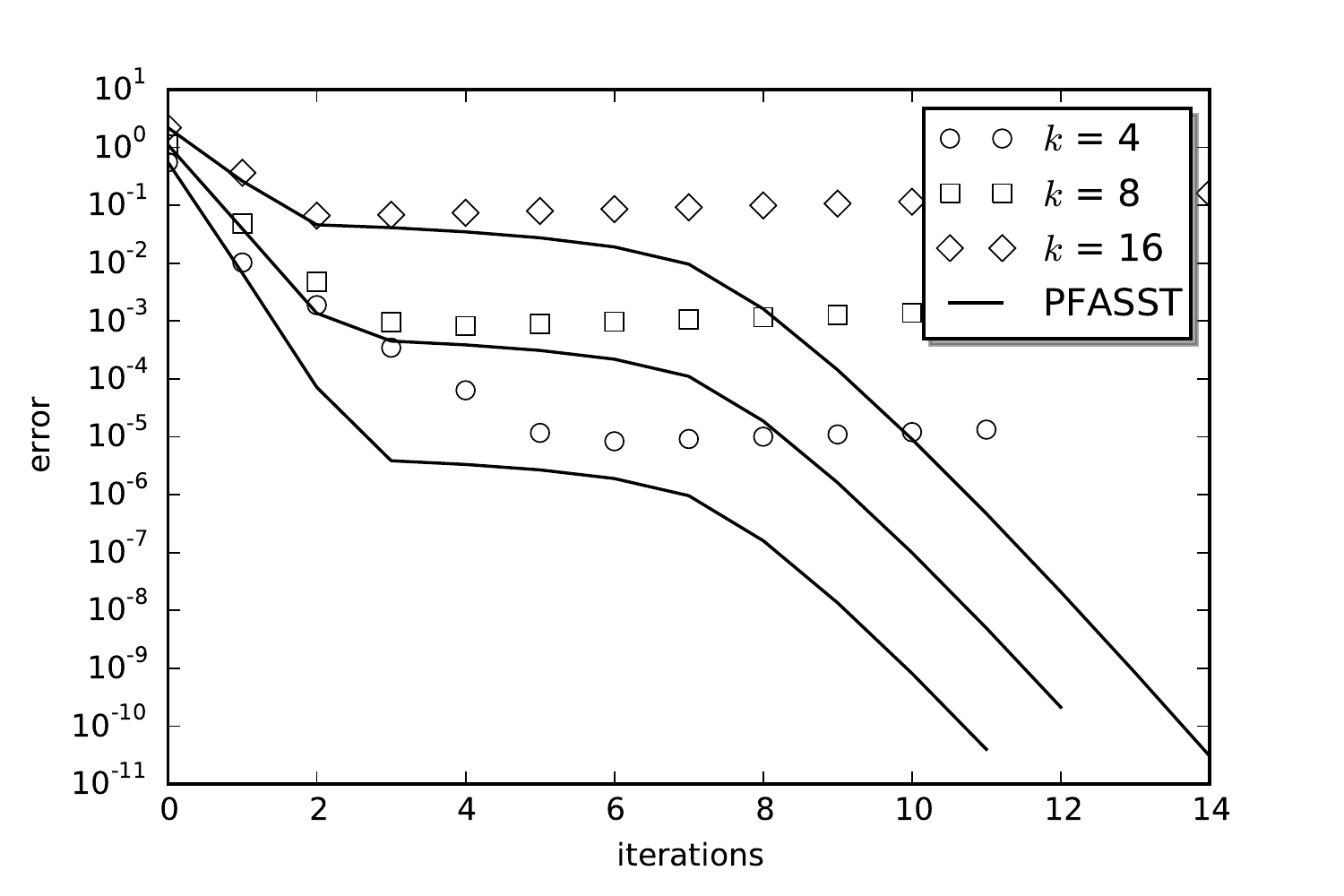}}\newline \centering
	\subfloat[\label{fig:adv_sama_err_diff}]{\includegraphics[width=0.48\textwidth]{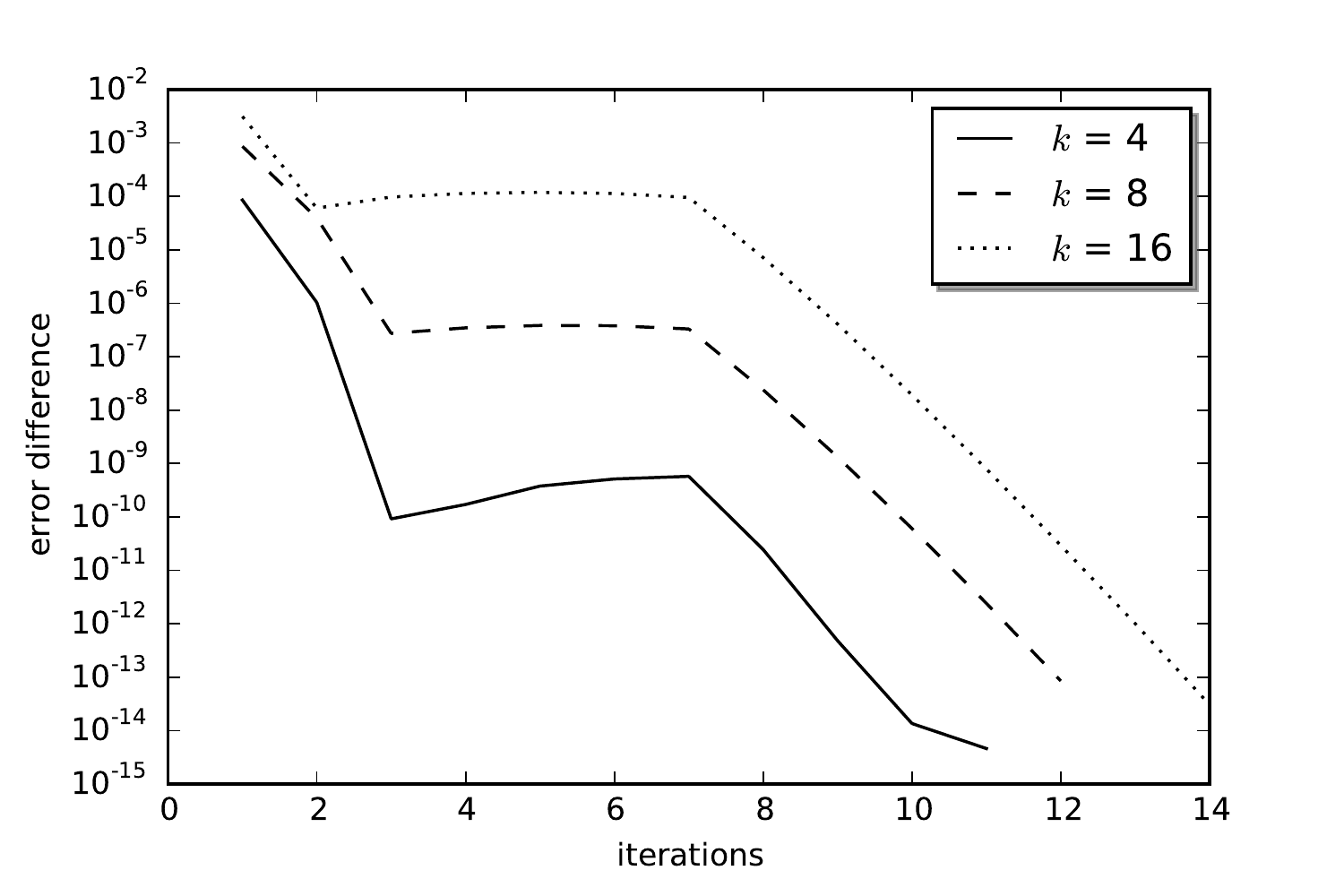}}
	\subfloat[\label{fig:adv_lfa_err_diff}]{\includegraphics[width=0.48\textwidth]{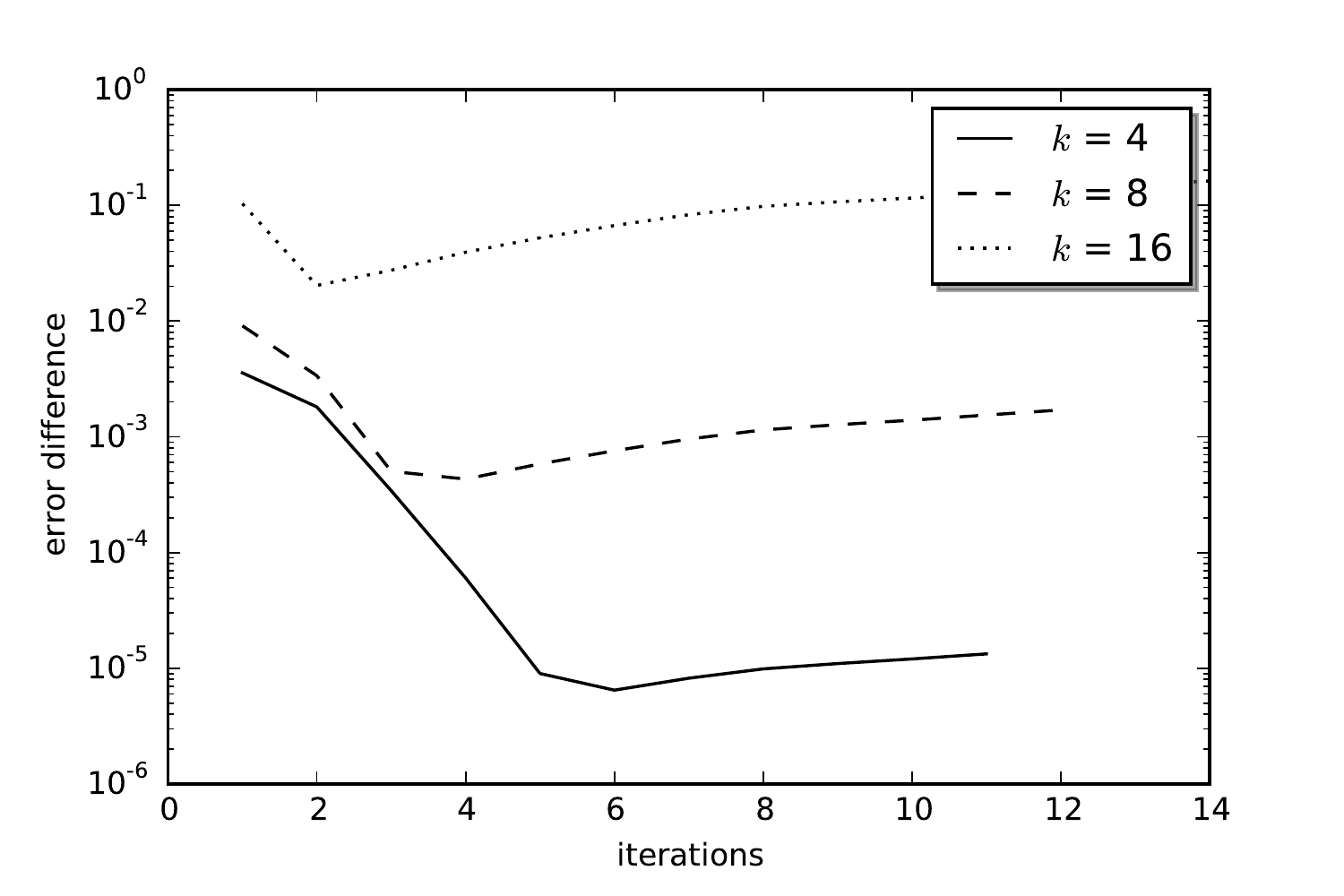}}	
	\caption{Strategy 4 for various $k$, using time collocation blocks on the left and collocation blocks on the right for the advection problem.
		In the second row the difference between the error and the error prediction is plotted.}
	\label{fig:adv_strat_4}
\end{figure}

\section{Conclusion and Outlook}
\label{ch:outro}
In this paper we decomposed the PFASST algorithm into its atomic parts.
Using analogies to classical iterative methods like Gau\ss-Seidel and Jacobi, we described PFASST for two levels and linear problems as a combination of a highly parallel, approximative block Jacobi solver on the fine level and a serial, approximative block Gau\ss-Seidel solver on the coarse level.
With this we could show that for linear problems PFASST is a multigrid algorithm for the composite collocation problem in space and time.
We stated the underlying composite collocation problem in matrix formulation, spanning the full domain in space and time, and decomposed it into three layers: spatial decomposition, time-stepping and quadrature nodes.
With suitable transformations, we could show the similarity of PFASST's iteration matrix to a block-diagonal matrix, containing either $N/2$ time-collocation blocks of size $2ML$ or $NL/2$ collocation blocks of size $2M$. 
While in the first case the analysis is rigorous, in the second case periodicity in time is assumed.

We identified 4 different strategies to test the convergence properties of PFASST using the block diagonalization of the iteration matrix.
Along the lines of two prototype problems, we investigated the quality of the predictions given by these strategies compared to the numerical results form PFASST.
We explored the effect of PFASST on different modes of the solution, 
depending on the initial values.



With a suitable measure for the convergence speed of PFASST at hand, the central next step would be to estimate the parallel performance of this algorithm in comparison to serial SDC runs. 
To this end, block diagonalizations of PFASST and SDC can be compared following the strategies presented in this work. 
This would augment the current speedup considerations of PFASST as stated in~\cite{EmmettMinion2012} by providing estimates for the actual iterations counts.
In addition, we have identified the following topics as relevant for further studies.

\paragraph{Detailed parameter and component studies.}
So far, we have only investigated simple 1D problems, demonstrating how the LFA of the iteration matrix can be used to predict the convergence behavior of PFASST for different situations.
These examples can serve as a blueprint for a much deeper and more detailed analysis of PFASST's convergence properties for various problems.
Also, the matrix formulation of PFASST allows us to exchange parts more easily. 
We can test other smoothers than SDC, change the quadrature rules used on the subintervals, vary interpolation and restriction on space (and even time) and apply iterative solvers like standard multigrid in space for inverting the spatial operators.

\paragraph{Non-linear functions.}
We restricted our self to linear problems in order to apply the Local Fourier Analysis. 
However, the notation used is derived from the Full Approximation Scheme 
and therefore is also applicable to non-linear right-hand sides.
Meaning that we use a non-linear function $ f\left( \tvect{U}\left( \tau \right) \right) $ of $\matr{A}\tvect{U}\left( \tau \right)$, 
also meaning that most matrices are exchanged by operators. 
These changes make a convergence analysis more difficult.

\paragraph{Extension to multiple levels.}
In contrast to Parareal, the PFASST algorithm is designed to use more than two levels. 
Due to the simplification of the notation and the rigor of the argumentation chain, 
this fact was not exploited. 
For the same reasons, the interpolation and restriction matrices effected only the spatial dimension, 
although it is possible to construct coarse levels with less quadrature nodes than on the fine level. 
The effects on the formalism in Section~\ref{ch:lfa_and_transformation_matrices} would be minor. 
It is another story, if a coarse level is constructed where two or more subintervals from the fine level are merged to one. 
This would be a step in the direction of full Space-Time MultiGrid, but some work is needed to adjust the formalism in Section~\ref{ch:lfa_and_transformation_matrices} 
for a similar convergence analysis.
Another step towards full ST-MG would be the use the exact solution on the coarsest level instead of one or more SDC sweeps,
but first brief experiments showed no significant difference between the use of the exact solution or the use of SDC Sweeps.

\paragraph{Rigorous convergence analysis.}
The usual attempt in Multigrid theory for a rigorous convergence analysis contains the proof of the smoothing and approximation property.
Both endeavors are difficult on their own, but, in our case, are further impeded by the matrices $\matr{Q},\matr{Q}_{\Delta}$. 
These matrices are dense and yield no obvious structural properties, which could be exploited. 
First steps towards a more rigorous analysis would be to resolve this problem.


\appendix
\bibliographystyle{wileyj}
\bibliography{books,sdc,Pint,Pint_Self}

\end{document}